\theoremstyle{plain}
\newtheorem{theorem}{Theorem}
\newtheorem{proposition}[theorem]{Proposition}
\newtheorem{corollary}[theorem]{Corollary}
\newtheorem{lemma}[theorem]{Lemma}
\newtheorem{open.problem}{Open Problem}
\newtheorem*{theorem*}{Theorem}
\newtheorem*{corollary*}{Corollary}
\newtheorem*{proposition*}{Proposition*}
\newtheorem*{lemma*}{Lemma}
\newtheorem*{fact*}{Fact}
\newtheorem*{open.problem*}{Open Problem}
\newtheorem*{example*}{Example}
\newtheorem*{exercise*}{Exercise}
\theoremstyle{remark}
\newtheorem{claim}{Claim}
\newtheorem{remark}[theorem]{Remark}
\newtheorem*{claim*}{Claim}
\newtheorem*{remark*}{Remark}
\newtheorem{question}{Question}
\theoremstyle{definition}
\newtheorem{definition}{Definition}[section]
\DeclareMathOperator{\FS}{FS}
\begin{document}


\title{New bounds on the strength of some restrictions of Hindman's Theorem
\thanks{Part of this work was done while the first author was visiting the Institute for Mathematical Sciences, National University of Singapore in 2016. The visit was supported by the Institute. The second author was partially supported by Polish National Science Centre grant no. 2013/09/B/ST1/04390. The fourth author was partially supported by University Cardinal Stefan Wyszy\'nski in Warsaw grant UmoPBM-26/16. Some of the results have been presented at the conference Computability in Europe 2017 and appeared in an extended abstract in the proceedings of that conference.}}

\author{Lorenzo Carlucci \texttt{carlucci@di.uniroma1.it}
\and Leszek Aleksander Ko{\l}odziejczyk  \texttt{lak@mimuw.edu.pl}
\and Francesco Lepore  \texttt{leporefc@gmail.com}
\and Konrad Zdanowski \texttt{k.zdanowski@uksw.edu.pl}}


\newcommand\Nat{\mathbf{N}}
\newcommand\RCA{\mathsf{RCA}}
\newcommand\ACA{\mathsf{ACA}}
\newcommand\WKL{\mathsf{WKL}}
\newcommand\Ind{\mathsf{Ind}}

\newcommand\ATR{\mathsf{ATR}}
\newcommand\HT{\mathsf{HT}}
\newcommand\IPT{\mathsf{IPT}}
\newcommand\SRT{\mathsf{SRT}}
\newcommand\RT{\mathsf{RT}}
\newcommand\TT{\mathsf{TT}}

\newcommand\FUT{\mathsf{FUT}}
\newcommand\FU{\mathrm{FU}}

\newcommand\Pf{\mathcal{P}_{\mathrm{fin}}}

\newcommand\psRT{\mathsf{psRT}}
\newcommand\RWKL{\mathsf{RWKL}}
\newcommand\AHT{\mathsf{AHT}}
\newcommand\PHT{\mathsf{PHT}}
\newcommand\IPHT{\mathsf{IPHT}}
\newcommand\ADS{\mathsf{ADS}}
\newcommand\I{\mathsf{I}}
\newcommand\D{\mathsf{D}}

\newcommand{\rg}{\mathrm{rg}}

\newcommand{\imp}{\Rightarrow}
\newcommand{\revimp}{\Leftarrow}

\newcommand{\goed}[1]{ \ulcorner #1 \urcorner }
\newcommand{\num}[1]{\underline{#1}}
\newcommand{\tuple}[1]{\langle #1 \rangle}

\newcommand{\bool}{\mbox{\rm{bool}}}
\newcommand{\lin}{\mathrm{lin}}
\newcommand{\repr}{\mathrm{repr}}
\newcommand{\rem}{\mathrm{rem}}
\newcommand{\card}{\mathrm{card}}

\newcommand{\supexp}{\mathrm{supexp}}
\newcommand{\Log}{\mathrm{Log}}

\newcommand{\bbn}{\mathbb{N}}
\newcommand{\bbr}{\mathbb{R}}
\newcommand{\bbq}{\mathbb{Q}}
\newcommand{\bbz}{\mathbb{Z}}
\newcommand{\bba}{\mathbb{A}}

\newcommand{\ea}{\mathrm{EA}}
\newcommand{\pa}{\mathrm{PA}}
\newcommand{\pamin}{\mathrm{PA^-}}
\newcommand{\pr}{\mathrm{Pr}}

\newcommand{\rca}{\mathsf{RCA}_0}
\newcommand{\rcastar}{\mathsf{RCA}^*_0}
\newcommand{\wkl}{\mathsf{WKL}_0}
\newcommand{\wklstar}{\mathsf{WKL}^*_0}
\newcommand{\aca}{\mathsf{ACA}_0}
\newcommand{\wklrm}{\mathrm{WKL}}

\newcommand{\PHP}{\mathrm{PHP}}

\newcommand{\ido}{\mathrm{I}\Delta_0}
\newcommand{\bsi}{\mathrm{B}\Sigma_1}
\newcommand{\bs}[1]{\mathrm{B}\Sigma_{#1}}
\newcommand{\is}[1]{\mathrm{I}\Sigma_{#1}}

\newcommand{\n}{{\mathcal N}}
\newcommand{\m}{{\mathcal M}}
\newcommand{\X}{{\mathcal X}}
\newcommand{\Y}{{\mathcal Y}}

\newcommand{\es}[2]{\exists #1 \! < \! #2 \,}   
\newcommand{\as}[2]{\forall #1 \! < \! #2 \,}
\newcommand{\eb}[2]{\exists #1 \! \le \! #2 \,}
\newcommand{\ab}[2]{\forall #1 \! \le \! #2 \,}

\renewcommand{\restriction}{\mathord{\upharpoonright}}

\newcommand\apart[1]{#1\mbox{-}\mathsf{apart}}

\newcommand{\kznote}[1]{\footnote{kz: #1}}
\newcommand{\lknote}[1]{\footnote{lak: #1}}



\maketitle

\begin{abstract}
The relations between (restrictions of) Hindman's Finite Sums Theorem and (variants of) Ramsey's Theorem 
give rise to long-standing open problems in combinatorics, computability theory and proof theory.
We present some results motivated by these open problems. 
In particular we investigate the restriction of the Finite Sums Theorem
to sums of one or two elements, which is the subject of a long-standing open question 
by Hindman, Leader and Strauss. We show that this restriction 
has the same proof-theoretical and computability-theoretic lower bound that is known to hold for the 
full version of the Finite Sums Theorem. In terms of reverse mathematics, it implies $\ACA_0$. 
Also, we show that Hindman's Theorem restricted to sums of exactly $n\geq 3$ elements, is equivalent to 
$\ACA_0$, provided a certain sparsity condition is imposed on the solution set. 
The same results apply to bounded versions of the Finite Union Theorem, in which such a sparsity 
condition is built-in.
Further we show that the Finite Sums Theorem for sums of at most two elements is tightly 
connected to the Increasing Polarized Ramsey's Theorem for pairs introduced by Dzhafarov and Hirst.
The latter reduces to the former in a strong technical sense known as strong computable reducibility, 
which essentially means that there is a natural combinatorial reduction proof of one principle to the other. 
\end{abstract}



\section{Introduction and motivation}\label{sec:intro}

The Finite Sums Theorem by Neil Hindman~\cite{Hin:74} says that whenever the positive integers 
are coloured in finitely many colours there exists an infinite set of positive integers such that all the finite 
non-empty sums of distinct numbers from the set have the same colour. 
We denote this theorem by $\HT$ and use $\HT_k$ to stand for its restriction to $k$-colourings.
Writing $\FS(X)$ 
for the set of non-empty finite sums of distinct elements of the set $X$, the conclusion 
of Hindman's Theorem is that there exists an infinite $X\subseteq \Nat$ ($\Nat$ denotes the set of positive
integers throughout the paper) such that $\FS(X)$ is monochromatic.

There are some interesting long-standing open problems related to $\HT$ 
at the crossroads of combinatorics, proof theory and computability theory. 
The following question was asked by Hindman, Leader and Strauss in~\cite{Hin-Lea-Str:03}, 
and has been open since. 
\begin{quote}
{\em Question 12}. Is there a proof that whenever $\Nat$ is finitely coloured there is a sequence
$x_1, x_2, \dots$ such that all $x_i$ and all $x_i + x_j$ ($i\neq j$) 
have the same colour, that does not also prove the Finite Sums Theorem?
\end{quote}
It is very natural to recast the above question in the context of reverse mathematics, 
which is a framework for rigorously comparing the relative strength of theorems 
from all areas of mathematics over a fixed base theory (see~\cite{Sim:SOSOA,Hir:STT:14} for excellent introductions
to the topic). Traditionally such a base theory is the formal axiomatic
system $\RCA_0$ ($\RCA_0$ is mmenomic for 
Recursive Comprehension Axiom) capturing the intuitive idea of \emph{computable 
mathematics}.\footnote{Loosely speaking this means mathematics done using only constructions which could
be performed by a computer program, regardless of time and space constraints.}  
Denoting by $\HT^{\leq n}$ the restriction of $\HT$ to (non-empty) sums of at most $n$ distinct
elements, and by $\HT^{\leq n}_k$ the further restriction to $k$-colourings,
a good formal rendering of Question 12 reads as follows: 
Is $\HT^{\leq 2}$ enough to prove $\HT$ over $\RCA_0$?

Pinning down the exact strength of Hindman's Theorem 
is by itself one of the major open problems in reverse mathematics (see~\cite[Question 9]{Mon:11:open}). 
The seminal results of Blass, Hirst and Simpson in the late eighties leave indeed a huge gap between 
lower and upper bound. 
In terms of reverse mathematics these results place Hindman's Theorem not lower than the system $\ACA_0$ (Arithmetical Comprehension Axiom\footnote{This axiom
guarantees the existence of all sets of natural numbers that can be defined without
quantifying over infinite objects but with otherwise no bound on the number of alternations of quantifiers in the defining formula. 
It is equivalent to asserting that the Turing Jump of any set exists.}) 
and not higher than the much stronger system $\ACA_0^+$.\footnote{The difference is the same between being able to decide the Halting Set and being able to decide any arithmetical truth about the natural numbers. The system $\ACA_0^+$ extends
$\ACA_0$ by the axiom stating that the $\omega$-th Turing jump is always defined. Recall that the $\omega$-th Turing Jump of the empty set is the degree of unsolvability of arithmetical truth. 
} 
Note that $\ACA_0$ is known to be equivalent to $\RT^3_2$ (Ramsey's Theorem 
for 2-colorings of triples) by seminal work of Jockusch and of Simpson (\cite[Theorem III.7.6]{Sim:SOSOA} or \cite[Chapter 6]{Hir:STT:14}), so we have that $\HT$ implies $\RT^3_2$ over $\RCA_0$. 
On the other hand $\ACA_0^+$ was only recently given a Ramsey-theoretic characterization
in work of the first and fourth author, who showed \cite{Car-Zda:14} that the system $\ACA_0^+$ is equivalent
to a Ramsey-theoretic theorem due to Pudl\'ak and R\"odl \cite{Pud-Rod:82} and Farmaki and Negrepontis \cite{Far-Neg:08}, 
which we denote by $\RT^{!\omega}_2$ (see 
Definition \ref{defi:rtomega}). 
This theorem extends Ramsey's Theorem to colourings of objects of variable dimension, in particular to 
so-called {\em exactly large sets} of integers, where a set is exactly large in case its cardinality is greater by one than
its minimum element.
The following inequalities summarize the situation with respect to implications over the base theory $\RCA_0$:
\[\RT^{!\omega}_2 \to \HT \to \RT^3_2,\]
where at least one of the two implications does not reverse, because 
it is known that $\RT^3_2 \nrightarrow \RT^{!\omega}_2$ (in fact, 
$\forall n \forall k \RT^n_k \nrightarrow \RT^{!\omega}_2$).

In terms of computability theory, the Blass-Hirst-Simpson's bounds on $\HT$ can be expressed as follows. 
On the one hand, there exists a computable coloring $c:\Nat\to 2$ such that any solution
to Hindman's Theorem for the coloring\footnote{By a ``solution to Hindman's Theorem for the coloring $c$'' we mean an infinite set $H$ such that all finite non-empty sums of elements from $H$ have the same $c$-color.} $c$ computes $\emptyset'$, the first Turing Jump of the computable sets. On the other hand, for every computable coloring $c:\Nat\to 2$ there exists a solution set
computable from $\emptyset^{(\omega+1)}$, the $(\omega+1)$-th Turing Jump of the computable sets.

In \cite{Bla:05} Blass advocated the study of restrictions of Hindman's Theorem to sums of bounded length (i.e., 
number of terms), conjecturing that the strength of $\HT$ grows with the length of the sums for which monochromaticity is required. 
Only recently Dzhafarov, Jockusch, Solomon and Westrick~\cite{DJSW:16} proved that 
the restriction of $\HT$ to sums of at most 3 terms from the solution set, $\HT^{\leq 3}$, already implies
$\ACA_0$, thus realizing the only known lower bound for $\HT$ (in particular, $\HT^{\leq 3}_3$ suffices).

One of our main results is that the same lower bound 
already holds for the restriction to sums 
of at most 2 elements, $\HT^{\leq 2}$, i.e., the 
restriction of $\HT$ considered in \cite[Question 12]{Hin-Lea-Str:03}.
This means that the known upper and lower bounds for $\HT$ and $\HT^{\leq 2}$ are now the same,
which might be read as indicating that the restriction of $\HT$ to sums of at most two terms 
is close in strength to the full theorem. 

On the other hand, we prove that the same lower bound holds for a number of
restricted forms of $\HT$ for which a matching upper bound can also be proved. 
The first examples of principles with this property, at the level of $\ACA_0$, 
were found in \cite{Car:16:wys} and therein called ``weak yet strong'' principles. 
We improve and expand on \cite{Car:16:wys} by showing, 
for example, that Hindman's Theorem for sums of exactly $n$ elements --- denoted $\HT^{=n}_k$, for $k$-colourings 
--- is equivalent to $\ACA_0$, provided that $n\geq 3$ and a certain sparsity condition is imposed on the solution set. 
Such a condition, which we call the apartness condition,  
is crucial yet not given a name in earlier work \cite{Hin:72,Bla-Hir-Sim:87,DJSW:16}. In our setting it means that the sets of exponents
in some fixed base of the elements of the homogeneous set do not intertwine. An analogous condition is built-in in 
the formulation of Hindman's Theorem in terms of finite unions (the Finite Unions Theorem), 
and called the unmeshedness condition (\cite{Bla:05}) or 
the block sequence condition (\cite{Arg-Tod:05}). We will observe that bounded versions of the Finite 
Unions Theorem are equivalent to bounded versions of the Finite Sums Theorem with the apartness condition. 

Note that, in contrast to $\HT^{\leq n}_k$, the exact versions of Hindman's Theorem $\HT^{=n}_k$
are easily seen to follow from $\RT^n_k$: 
given a colouring $f:\Nat\to k$, let $c:[\Nat]^n \to k$
be defined by setting $c(a_1,\dots,a_n) = f(a_1+\dots+a_n)$. A solution to 
$\RT^n_k$ for the instance $c$ (i.e., an infinite monochromatic set $X$) is a solution to $\HT^{=n}_k$ for 
instance $f$ (i.e., $\FS^{= n}(X)$ is monochromatic, where we denote by $\FS^{=n}(X)$ the set
of sums of exactly $n$ many distinct elements of $X$).
We will prove, for example, that $\RT^3_2$ already follows from (and is actually equivalent to)
$\HT^{=3}_2$ with the apartness condition imposed on the solution set. 

The argument just given is an example of a particularly simple and natural combinatorial reduction of the principle 
$\HT^{=n}_k$ to $\RT^n_k$: Starting from an instance $f$ of $\HT^{= n}_k$ we defined an instance $c$ of $\RT^{n}_{k}$. 
From a solution $X$ to $c$ we recovered a solution $X'$ to the 
original instance of $\HT^{=n}_k$ (in that case $X'$ equals $X$). Proofs of this kind are abundant in combinatorics.
Furthermore observe that in the above example $c$ is easily seen to be computable relative to\footnote{Informally, this
means that the transition from $f$ to $c$ can can be done by a computer assuming it has access to values of $f$.} $f$
and similarly $X'$ is computable relative to $X$ (this is obvious since $X=X'$ in the example at hand). 
Such a proof that $\RT^n_k$ follows from $\HT^{=n}_k$ is an instance of what is known in the literature as 
a {\em strong computable reduction}.
This notion, first defined in \cite{Dza:15}, has quickly become central in the computable and 
reverse mathematics literature (see, e.g., \cite{Dza:16} and references therein). We use the notation 
$\mathsf{Q}\leq_{\mathrm{sc}}\mathsf{P}$ to indicate that a Ramsey-type theorem $\mathsf{Q}$ is
reducible to another Ramsey-type theorem $\mathsf{P}$ by a strong computable reduction. 
Not all proofs of an implication over $\RCA_0$ have the form of a strong computable reduction. 
For example, it has been recently proved \cite{Pat:16} that there is no strong computable
reduction from $\RT^n_3$ to $\RT^n_2$, despite the fact that a straightforward combinatorial
argument exists and that the two theorems are equivalent over $\RCA_0$.
In the present paper, however, we only deal with positive results.
For example we prove that 
an interesting restriction of Ramsey's Theorem for pairs (the Increasing Polarized Ramsey's Theorem 
of Dzhafarov and Hirst's \cite{Dza-Hir:11}, denoted $\IPT^2_2$) is strongly computably reducible 
to $\HT^{\leq 2}_4$ (in fact to $\HT^{=2}_2$ with the apartness condition imposed on the solution set).

The paper is organized as follows. In Section~\ref{sec:apart} we define the apartness condition and prove a 
simple lemma about it, and discuss the equivalence of the bounded versions of the Finite Unions Theorem with 
bounded versions of the Finite Sums Theorem with apartness. 
In Section~\ref{sec:aca} we prove $\ACA_0$ lower bounds for restrictions of Hindman's Theorem,
including our main result that $\HT^{\le 2}$ implies $\ACA_0$ over $\RCA_0$.
In Section~\ref{sec:ipt} we deal with reductions between Hindman's Theorem and the
Increasing Polarized Ramsey's Theorem.
In Section~\ref{sec:misc} we present a number of other results that can be 
obtained by the arguments of the previous sections. 
In Section~\ref{sec:conclusion} we summarize our results and 
discuss some open problems.

\section{Hindman's Theorem, apartness, and finite unions}\label{sec:apart}

We define two natural types of restrictions of Hindman's Theorem based on bounding the 
length of sums for which homogeneity is guaranteed. 

\begin{definition}[Hindman's Theorem with bounded-length sums]
Fix $n,k\geq 1$. 
\begin{enumerate}
\item $\HT^{\leq n}_k$ is the following principle: For every coloring $f~:~\Nat\to k$ there exists an infinite set 
$H\subseteq\Nat$ such that $FS^{\leq n}(H)$ is monochromatic for $f$. 
\item $\HT^{=n}_k$) 
is the following principle: For every coloring $f~:~\Nat\to k$ there exists an infinite set 
$H\subseteq\Nat$ such that $FS^{=n}(H)$ is monochromatic for $f$. 
\end{enumerate}
\end{definition}
The principles $\HT^{\leq n}_k$ were discussed in~\cite{Bla:05} (albeit phrased in terms of finite
unions instead of sums) and first studied from the perspective of Computable and Reverse 
Mathematics in~\cite{DJSW:16}, where the principles $\HT^{=n}_k$ were also defined.

As indicated above, some of our results highlight the crucial role of a property of the solution set 
-- which we call the apartness condition -- that is central in Hindman's original
proof and in the proofs of the lower bounds in~\cite{Bla-Hir-Sim:87,DJSW:16,Car:16:wys}. 


We use the following notation: Fix a base 
$t\geq 2$. For $n\in \Nat$
we denote by $\lambda_t(n)$ 
the least exponent of $n$ written in base $t$, by $\mu_t(n)$ the greatest exponent of $n$ written in base $t$.
We will drop the subscript when clear from context.

\begin{definition}[Apartness Condition]
Fix $t\geq 2$. We say that a set $X\subseteq\Nat$ satisfies the $t$-apartness condition (or {\em is $t$-apart}) if for all $x,x'\in X$, 
if $x<x'$ then $\mu_t(x)<\lambda_t(x')$. 
\end{definition}
Note that $t$-apartness is inherited by subsets.

For a Hindman-type principle $\mathsf{P}$,
let ``{\em $\mathsf{P}$ with $t$-apartness}'' denote the corresponding version in which 
the solution set is required to satisfy the $t$-apartness condition. 
As will be observed below, it is significantly easier to prove lower bounds on $\mathsf{P}$ with 
$t$-apartness than on $\mathsf{P}$ in all the cases we consider. 
In Hindman's original paper it is shown \cite[Lemma 2.2]{Hin:74} how 2-apartness can be ensured 
by a simple counting argument (proved in \cite[Lemma 2.2]{Hin:72})
under the assumption that we have a solution to the Finite Sums Theorem, i.e., an infinite $H$ 
such that $\FS(H)$ is monochromatic. In our terminology, we have that, for each $k\in\Nat$, 
$\HT_k$ is equivalent to $\HT_k$ with 2-apartness. Note that the counting argument used by Hindman 
\cite[Lemma 2.2]{Hin:72} 
requires very elementary arithmetic assumptions, and that the set satisfying $t$-apartness is obtained
from a general solution to $\HT$ by an algorithmic thinning out procedure
(as 
observed already in \cite{Bla-Hir-Sim:87}). In other words, $\HT$ and $\HT$ with $t$-apartness are
equivalent over $\RCA_0$.
\begin{proposition}[Implicit in \cite{Hin:72}]
For each positive integers $t$ and $k$, 
$\HT_k$ and $\HT_k$ with $t$-apartness are equivalent over $\RCA_0$. The equivalence is witnessed by strong computable reductions. 
\end{proposition}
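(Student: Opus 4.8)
The plan is to prove both directions of the equivalence, each witnessed by a strong computable reduction, with all the real work going into the forward direction ($\HT_k$ implies $\HT_k$ with $t$-apartness).

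The backward direction ($\HT_k$ with $t$-apartness implies $\HT_k$) is essentially trivial: any solution set $H$ satisfying the $t$-apartness condition is in particular an infinite set with $\FS(H)$ monochromatic, so it already solves $\HT_k$ for the same instance $f$. Thus the identity map on instances and solutions witnesses $\HT_k \leq_{\mathrm{sc}} \HT_k$-with-$t$-apartness, and over $\RCA_0$ the implication is immediate.

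For the forward direction I would follow Hindman's counting argument from \cite[Lemma 2.2]{Hin:72}. Starting from an instance $f : \Nat \to k$ of $\HT_k$, I apply $\HT_k$ to the \emph{same} instance $f$ to obtain an infinite set $H = \{h_0 < h_1 < h_2 < \dots\}$ with $\FS(H)$ monochromatic. The goal is to thin $H$ out to an infinite subset $H' \subseteq H$ that is $t$-apart; since $t$-apartness is inherited by subsets and $\FS(H') \subseteq \FS(H)$, the set $H'$ will be a solution with $\FS(H')$ monochromatic. The thinning is carried out greedily: having chosen $h_{i_0} < \dots < h_{i_j}$ with the prefix $t$-apart, I search for the next element $h_{i_{j+1}} \in H$ whose least base-$t$ exponent $\lambda_t(h_{i_{j+1}})$ strictly exceeds $\mu_t(h_{i_j})$, the greatest exponent so far. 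The key combinatorial point, which is exactly Hindman's counting observation, is that such an element always exists because the $\mu_t$ values of an infinite set of positive integers are unbounded, so infinitely many elements of $H$ have $\lambda_t$ above any fixed threshold. Hence the greedy construction never gets stuck and produces an infinite $t$-apart subset.

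The two steps I expect to need the most care are \textbf{(i)} formalizing the greedy construction inside $\RCA_0$ and checking it is genuinely a \emph{strong computable} reduction, and \textbf{(ii)} verifying the counting claim with only weak arithmetic. For (i), the selection of $H'$ from $H$ uses only the function $\mu_t$ and $\lambda_t$, which are primitive recursive in base-$t$ notation, so $H'$ is computable from $H$ uniformly; the instance transformation is the identity, so no oracle is needed there. This gives the strong computable reduction in both the instance and solution directions, and the existence of $H'$ as an infinite set follows from $\Sigma^0_1$ (or at most $\Delta^0_1$) comprehension relative to $H$, available in $\RCA_0$. The main obstacle is (ii): the paper stresses that Hindman's counting requires only ``very elementary arithmetic assumptions,'' so I would make sure the argument that infinitely many elements have arbitrarily large $\lambda_t$ goes through without invoking stronger induction than $\RCA_0$ provides, using that for any bound $b$ only finitely many integers have all their base-$t$ exponents below $b$. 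Once the unboundedness is secured, the greedy step is routine bounded search.
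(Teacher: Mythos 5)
Your backward direction is fine, but the forward direction rests on a false combinatorial claim, and this is precisely the point where Hindman's counting argument is needed. You assert that "infinitely many elements of $H$ have $\lambda_t$ above any fixed threshold" because the $\mu_t$ values of an infinite set are unbounded. This is a non sequitur: unboundedness of $\mu_t$ says nothing about $\lambda_t$. Concretely, take the constant colouring and the solution $H = \{2^n+1 : n \in \Nat\}$ (or any infinite set of odd numbers): every element has $\lambda_2 = 0$, so no two elements of $H$ are $2$-apart and your greedy search gets stuck after the first step. Since a strong computable reduction must handle every solution to the $\HT_k$-instance it queries, producing an infinite $t$-apart set as a \emph{subset} of $H$ is impossible in general. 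A quick sanity check confirms something must be wrong: your argument only passes to a subset of $H$, so if it worked it would apply verbatim to the bounded restrictions $\HT^{\leq n}_k$ — but the paper stresses that the implication crucially uses monochromaticity of \emph{all} finite sums and that "putting a bound on the length of the sums would disrupt the argument."

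The missing idea is that the $t$-apart set $H'$ must be built from \emph{sums of blocks} of elements of $H$, not from single elements. Hindman's counting lemma does this via pigeonhole on partial sums: among any $t^b$ elements of $H$, some nonempty subset $F$ has $\sum F \equiv 0 \pmod{t^b}$ (consider the partial sums modulo $t^b$; either one is $0$ or two coincide, and their difference is a sum of distinct elements), hence $\lambda_t(\sum F) \geq b$. So one recursively chooses pairwise disjoint finite blocks $F_1, F_2, \dots \subseteq H$ with $\lambda_t(\sum F_{j+1}) > \mu_t(\sum F_j)$ and sets $H' = \{\sum F_j : j \in \Nat\}$. This $H'$ is $t$-apart, and because the blocks are disjoint, every finite sum of distinct elements of $H'$ is itself a finite sum of distinct elements of $H$, so $\FS(H') \subseteq \FS(H)$ remains monochromatic — this inclusion is exactly where the unbounded length of sums in $\HT_k$ is consumed. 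The construction is computable from $H$ (bounded search using $\lambda_t$, $\mu_t$, and pigeonhole among finitely many residues) and formalizes in $\RCA_0$, giving the strong computable reduction; your formalization remarks in (i) and (ii) would then apply to this corrected construction.
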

Note that, to show the implication from $\HT_k$ to $\HT_k$ with $t$-apartness 
it is crucial that we start with a homogeneous set $H$ such that all finite sums of 
distinct elements from $H$ have the same colour. Putting a bound on the length of the sums would disrupt 
the argument. 
Thus, for bounded versions of $\HT$, the situation might be different. However, in typical situations, 
the choice of $t$ in $t$-apartness does not matter. We prove below 
that $\HT^{\leq n}_k$ with $t$-apartness  and $\HT^{=n}_k$ with $t$-apartness are 
robust concepts and that it is sufficient to consider the case of $t=2$. To show this in detail we make a detour through
another popular formulation of Hindman's Theorem in terms of colorings of finite subsets of the 
natural numbers (see, e.g., \cite{Ber:10}). This version is called the Finite Union Theorem. Let $\Pf(X)$ denote the set of finite subsets of $X$. Let $\Nat_0$ denote
$\Nat\cup\{0\}$. 
If $(X_i)_{i\in\Nat}$ is a sequence of finite subsets of $\Nat$, 
we denote by $\FU((X_i)_{i\in\Nat})$ the set of all finite unions of elements of $(X_i)_{i\in\Nat}$, i.e., 
$\FUT((X_i)_{i\in\Nat} = \{ \bigcup_{t \in F} X_t \,:\, F \mbox{ a non-empty finite subset of }\Nat_0\}$. 
\begin{definition}[Finite Unions Theorem]
$\FUT_k$: For every $f:\Pf(\Nat_0)\to k$ there exists an infinite sequence $(X_i)_{i\in\Nat}$ of finite subsets of $\Nat$ such that
if $i< j$ then $\max(X_i) < \min(X_j)$ and such that $\FU((X_i)_{i\in \Nat}$ is monochromatic. $\FUT$ denotes $\forall k \FUT_k$.
\end{definition}
A sequence $(X_i)_{i\in\Nat}$ of finite sets is called {\it unmeshed} or a {\it block sequence} if it satisfies
the condition that for each $i < j$ then $\max(X_i) < \min(X_j)$. This condition is obviously akin to apartness
and is part of the very statement of the Finite Unions Theorem. If this requirement is dropped, then the theorem becomes equivalent to 
the Infinite Pigeonhole Principle $\forall k \RT^1_k$ as proved by Hirst in \cite{Hir:12:HvsH}. 

The equivalence of $\HT$ with $\FUT$ is well-known (see, e.g., \cite{Ber:10}) and an inspection of the proof shows that
it is witnessed by strong computable reductions.  We below verify that the equivalence still holds between $\FUT^{\leq n}_k$ (resp. $\FUT^{=n}_k$) and $\HT^{\leq n}_k$ with $t$-apartness (resp. $\HT^{=n}_k$  with $t$-apartness), for any $t$, where $\FUT^{\leq n}_k$ and 
$\FUT^{=n}_k$ have the obvious meanings. 

This shows that the principles $\HT^{\leq n}_k$ with $2$-apartness are the natural bounded restrictions of $\HT$. 
Thus, we will only need to consider $2$-apartness in what follows, despite our use of $3$-apartness in 
Lemma \ref{lem:leqn_to_apart}.

\begin{proposition}\label{prop:ht_eq_fut}
For each $n,kt\geq 2$, $\HT^{\leq n}_{k}$ with $t$-apartness is equivalent to $\FUT^{\leq n}_k$ over $\RCA_0$.
Moreover, these principles are mutually strongly computably reducible.  The same equivalences hold for $\HT^{=n}_k$ with $t$-apartness
and $\FUT^{=n}_k$.
\end{proposition}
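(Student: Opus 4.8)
The plan is to establish a pair of mutually inverse translations between instances and solutions of $\FUT^{\leq n}_k$ (resp.\ $\FUT^{=n}_k$) and $\HT^{\leq n}_k$ with $t$-apartness (resp.\ $\HT^{=n}_k$ with $t$-apartness), each computable from the given data, and to check that solutions transfer correctly under these translations. The guiding idea is the classical correspondence between a finite set $F\subseteq\Nat_0$ and the integer $\sum_{i\in F} t^i$, i.e.\ reading $F$ as the set of exponents occurring in the base-$t$ expansion of a number. Under this correspondence, disjoint unions of sets correspond exactly to sums of the associated integers \emph{without carrying}, and the unmeshedness (block sequence) condition on a sequence of finite sets corresponds precisely to the $t$-apartness condition on the associated integers, since $\max(X_i)<\min(X_j)$ becomes $\mu_t(x_i)<\lambda_t(x_j)$. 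This is exactly why apartness is the right analogue of unmeshedness, and why the choice of $t$ is immaterial.

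First I would fix the encoding. Given a finite $F\subseteq\Nat_0$, set $e_t(F)=\sum_{i\in F}t^i$; this is a bijection between nonempty finite subsets of $\Nat_0$ and positive integers, computable in both directions. The key arithmetic observation, provable with the elementary means available in $\RCA_0$, is that if $X_{i_1},\dots,X_{i_m}$ are pairwise disjoint then $e_t\bigl(\bigcup_j X_{i_j}\bigr)=\sum_j e_t(X_{i_j})$, because disjointness means the exponents never collide and no carrying occurs. In particular, along a block sequence the sets are automatically disjoint, and conversely if the integers $x_{i_1},\dots,x_{i_m}$ are pairwise $t$-apart then their sum is computed digit-by-digit, so summing integers mirrors taking unions. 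This gives, for a block sequence $(X_i)$, the identity $e_t\bigl[\FU((X_i))\bigr]=\FS\bigl(\{e_t(X_i)\}\bigr)$, and the restricted-length versions match up term-for-term because a union of $j$ blocks corresponds to a sum of $j$ integers.

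Next I would use this to push colourings back and forth. For the reduction $\FUT^{\leq n}_k\leq_{\mathrm{sc}}$ $\HT^{\leq n}_k$-with-$t$-apartness, given $f:\Pf(\Nat_0)\to k$ I define $g:\Nat\to k$ by $g(x)=f(e_t^{-1}(x))$; a $t$-apart solution $H=\{x_i\}_<$ to $\HT^{\leq n}_k$ for $g$ yields the block sequence $X_i=e_t^{-1}(x_i)$ (unmeshed by $t$-apartness), and monochromaticity of $\FS^{\leq n}(H)$ transfers to $\FU^{\leq n}$ of the blocks by the identity above. For the converse direction, given $g:\Nat\to k$ I set $f(F)=g(e_t(F))$; a solution block sequence $(X_i)$ to $\FUT^{\leq n}_k$ produces the integers $x_i=e_t(X_i)$, which are $t$-apart since the sequence is unmeshed, and form a solution to $\HT^{\leq n}_k$ with $t$-apartness. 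Both $g\mapsto f$ and the solution-recovery maps are computable in the given data, so these are genuine strong computable reductions; the same argument works verbatim for the $\HT^{=n}_k$/$\FUT^{=n}_k$ pair, and running it over $\RCA_0$ yields the stated equivalences.

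The main obstacle I anticipate is purely bookkeeping rather than conceptual: verifying that the encoding $e_t$, its inverse, and the claimed additivity-under-disjointness identity are all available and provably correct in the weak base theory $\RCA_0$, and that the unmeshedness condition is both \emph{needed} and \emph{sufficient} to guarantee the digit-disjointness that makes $e_t$ a sum-to-union homomorphism. One should be slightly careful that the empty set and the indexing over $\Nat_0$ versus $\Nat$ line up (the definition of $\FU$ allows the index $0$), and that $\lambda_t,\mu_t$ behave as expected for the base-$t$ representation; but none of this poses a real difficulty, and the whole proposition reduces to recording these elementary facts carefully.
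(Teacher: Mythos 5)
Your overall strategy is the same as the paper's: encode finite sets as base-$t$ integers, observe that unmeshedness of a block sequence corresponds exactly to $t$-apartness, note that sums of $t$-apart numbers involve no carrying, and transfer colourings and solutions in both directions. The direction that produces a $t$-apart $\HT$-solution from a block-sequence solution (setting $x_i=\sum_{j\in X_i}t^j$, so that unions of blocks become sums without carrying) is correct exactly as you wrote it, for every $t\geq 2$, and coincides with the paper's argument.

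There is, however, a genuine gap in the other direction when $t\geq 3$: the map $e_t(F)=\sum_{i\in F}t^i$ is \emph{not} a bijection onto the positive integers unless $t=2$. Its range consists only of numbers all of whose base-$t$ digits are $0$ or $1$; for instance $2\notin\rg(e_3)$. Consequently your colouring $g=f\circ e_t^{-1}$ is undefined on most of $\Nat$, and if you patch it by colouring the remaining numbers arbitrarily, the argument still fails: the $t$-apart set $H$ produced by $\HT^{\leq n}_k$ may consist entirely of numbers outside $\rg(e_t)$ (nothing in the principle lets you constrain the digits of the solution), so $X_i=e_t^{-1}(x_i)$ is undefined and monochromaticity for $g$ says nothing about $f$. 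The repair --- which is precisely what the paper does --- is to replace $e_t^{-1}$ by the total ``digit support'' map: colour $m\in\Nat$ by the $f$-colour of the set of exponents occurring with nonzero digit in the base-$t$ expansion of $m$. This is defined on all of $\Nat$, and for $t$-apart summands the support of a sum is still the disjoint union of the supports, regardless of the digit values, so the transfer of monochromaticity goes through with $S_i$ taken to be the support of $h_i$ rather than a preimage under $e_t$. Note finally that proving only the case $t=2$, where your bijectivity claim is true, would not suffice: the proposition must hold for arbitrary $t\geq 2$, since it is the source of Corollary~\ref{cor:apart} asserting that the choice of $t$ is immaterial, and that corollary cannot be invoked in its own proof.
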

\begin{proof}
We give the proof for $\FUT^{\leq n}_k$ and $\HT^{\leq n}_k$ with $t$-apartness. For $\FUT^{=n}_k$ and 
$\HT^{=n}_k$ with $t$-apartness the argument is exactly analogous.

Let $c:\Pf(\Nat_0)\to k$. Define $d:\Nat\to k$ as follows: $d$ colors $m\in\Nat$ as $c$ colors the set of 
its base $t$ exponents. 
By $\HT^{\leq n}_{k}$ with $t$-apartness let $H=\{h_1 < h_2 < \dots\}$ be a $t$-apart 
infinite set such that $\FS^{\leq n}(H)$ is monochromatic for $d$. For each $i\in\Nat$ let
$S_i$ be the set of base $t$ exponents of $h_i$. Then $(S_i)_{i\in \Nat}$ is a block sequence
in $\Pf(\Nat)$ such that $c$ is constant on $\FU^{\leq n}((S_i)_{ i \in\Nat})$. 

Let $d:\Nat\to k$. Define $c:\Pf(\Nat)\to k$ as follows: $c$ colors $S\in\Pf(\Nat)$ as $d$ colors $t^{s_1}+\dots + t^{s_p}$
where $S=\{s_1 < \dots < s_p\}$. Let $d$ color the other elements of $\Nat$ arbitrarily. 
Let $(S_i)_{i\in\Nat}$ be a block sequence such that $\FU^{\leq n}((S_i)_{i\in\Nat})$
is monochromatic for $c$. Let $S_i = \{ s^i_1  < \dots < s^i_{p_i}\}$. 
Then $\{x_i \,;\, x_i = t^{s^i_1}+\dots+t^{s^i_{p_i}}, i \in\Nat\}$ is a $t$-apart solution 
to $\HT^{\leq n}_k$ for $d$. 
\end{proof}


\begin{corollary}\label{cor:apart}
Over $\RCA_0$, $\HT^{\leq n}_k$ with $t$-apartness (resp. $\HT^{=n}_k$ with $t$-apartness) is equivalent to $\HT^{\leq n}_k$ 
with $s$-apartness (resp. $\HT^{=n}_k$ with $s$-apartness), for any $t,s\geq 2$.
\end{corollary}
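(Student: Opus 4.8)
The plan is to use the Finite Unions Theorem as a base-independent intermediary. The crucial point is that $\FUT^{\leq n}_k$ (resp.\ $\FUT^{=n}_k$) makes no reference whatsoever to a base $t$: its statement concerns only colourings of finite subsets of $\Nat_0$ and finite unions of block sequences, so a single principle sits between the $t$-apartness and $s$-apartness versions of $\HT^{\leq n}_k$. This is exactly what makes the corollary follow so directly from the preceding proposition.

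Concretely, I would invoke Proposition~\ref{prop:ht_eq_fut}, which gives, for every fixed base $\geq 2$, an equivalence over $\RCA_0$ between $\HT^{\leq n}_k$ with $t$-apartness and $\FUT^{\leq n}_k$. Applying this once with the base $t$ and once with the base $s$, and composing the two equivalences through the common principle $\FUT^{\leq n}_k$, yields
\[
\HT^{\leq n}_k \text{ with } t\text{-apartness} \;\leftrightarrow\; \FUT^{\leq n}_k \;\leftrightarrow\; \HT^{\leq n}_k \text{ with } s\text{-apartness}
\]
over $\RCA_0$, which is the desired statement. The argument for the exactly-$n$ case is identical, using $\FUT^{=n}_k$ as the hub. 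Since Proposition~\ref{prop:ht_eq_fut} further records that each of its equivalences is witnessed by strong computable reductions, and since strong computable reductions compose, the equivalence between the $t$- and $s$-apartness versions is likewise witnessed by strong computable reductions, should one wish to state that refinement.

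There is no real obstacle here: the corollary is an immediate consequence of Proposition~\ref{prop:ht_eq_fut} by transitivity of equivalence over $\RCA_0$. The only thing worth checking is that the two instances of the proposition really pass through the \emph{same} $\FUT^{\leq n}_k$ principle, independent of the base, and this is evident from the construction in its proof: the base $t$ enters only in the exponent bookkeeping on the $\HT$ side (translating between an integer and the set of its base-$t$ exponents), whereas the $\Pf(\Nat_0)$-colouring produced in the forward direction and consumed in the backward direction is a plain instance of $\FUT^{\leq n}_k$ carrying no trace of $t$.
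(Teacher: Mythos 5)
Your proof is correct and is exactly the paper's intended argument: the corollary is stated immediately after Proposition~\ref{prop:ht_eq_fut} precisely because it follows by applying that proposition with base $t$ and with base $s$ and composing through the base-independent principle $\FUT^{\leq n}_k$ (resp.\ $\FUT^{=n}_k$). Your additional observation that the equivalence is witnessed by strong computable reductions, since these compose, is a valid refinement consistent with the paper.
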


Henceforth we will use just apartness for $2$-apartness. Note that, in what follows, all the results for $\HT^{\leq n}_k$ with apartness
(resp. $\HT^{= n}_k$ with apartness) also hold in the case of $\FUT^{\leq n}_k$ (eq., for $\FUT^{=n}_k$). 

In some cases it is easy to show that the apartness condition can be enforced at no cost. 
For example the proof of
$\HT^{=n}_k$ from 
$\RT^n_k$
sketched above yields $t$-apartness for any $t>1$ simply by applying Ramsey's Theorem 
relative to an infinite $t$-apart set. 
In some other cases the apartness condition can be ensured at the cost of increasing
the number of colours. This is the case of $\HT^{\leq n}_k$, as illustrated by the next lemma. The idea of the proof is
from the first part of the proof of \cite[Theorem 3.1]{DJSW:16}, with some needed adjustments.

\begin{lemma}[$\RCA_0$]\label{lem:leqn_to_apart} 
For all $n \geq 2$, for all $k \geq 1$, $\HT^{\leq n}_{2k}$ implies 
$\HT^{\leq n}_k$ with apartness. Furthermore, the implication is established by a strong computable reduction.
\end{lemma}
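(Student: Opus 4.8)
The plan is to enlarge the coloring by a single bit that records just enough about the base-$2$ digits to guarantee that any homogeneous set can be thinned out to an apart one. Given an instance $f:\Nat\to k$ of $\HT^{\le n}_k$ with apartness, I would define $g:\Nat\to 2k$ by $g(m)=(f(m),\lambda(m)\bmod 2)$, where $\lambda(m)$ is the least base-$2$ exponent of $m$; clearly $g$ is computable from $f$. Applying $\HT^{\le n}_{2k}$ to $g$ produces an infinite $H$ with $\FS^{\le n}(H)$ monochromatic for $g$, say with second coordinate constantly $\epsilon\in\{0,1\}$. In particular $\FS^{\le n}(H)$ is monochromatic for $f$, and every element of $\FS^{\le n}(H)$ has $\lambda\equiv\epsilon\pmod 2$; this last fact is the leverage the extra bit provides.

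The heart of the argument is the claim that $\lambda$ is unbounded on $H$, which I would prove by contradiction. Suppose $\lambda(h)\le M$ for all $h\in H$. Writing each $h=2^{\lambda(h)}a_h$ with $a_h$ odd, the pair $(\lambda(h),a_h\bmod 4)$ ranges over only $2(M+1)$ values. Choosing any $2(M+1)+1$ distinct elements of $H$ and invoking the \emph{finite} pigeonhole principle, I obtain $h\ne h'$ in $H$ with $\lambda(h)=\lambda(h')=:\ell$ and $a_h\equiv a_{h'}\pmod 4$. Then $h+h'=2^{\ell}(a_h+a_{h'})$ with $a_h+a_{h'}\equiv 2a_h\equiv 2\pmod 4$, so $\lambda(h+h')=\ell+1$. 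Since $h+h'\in\FS^{\le 2}(H)\subseteq\FS^{\le n}(H)$, this forces $\ell+1\equiv\epsilon\equiv\ell\pmod 2$, a contradiction. Only pairwise sums are used, so the argument goes through uniformly for every $n\ge 2$.

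Granting unboundedness of $\lambda$ on $H$, I would build the solution by greedy thinning. Set $h'_1=\min H$; having chosen $h'_i$ with $\mu(h'_i)=M$ (with $\mu$ the greatest base-$2$ exponent), search $H$ for some $h>h'_i$ with $\lambda(h)>M$ and put $h'_{i+1}=h$, the search halting by unboundedness. The resulting $H'=\{h'_1<h'_2<\cdots\}$ is an infinite subset of $H$ with $\mu(h'_i)<\lambda(h'_{i+1})$ for all $i$, hence apart, and $\FS^{\le n}(H')\subseteq\FS^{\le n}(H)$ remains monochromatic for $f$; thus $H'$ solves $\HT^{\le n}_k$ with apartness. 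As $g$ is computable from $f$ and $H'$ is computed from $H$ by terminating searches, this is a strong computable reduction. The route above yields $2$-apartness directly; by Corollary~\ref{cor:apart} the choice of base is immaterial, so one may equally phrase the construction to produce $3$-apartness.

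The step I expect to be the main obstacle is the unboundedness claim, and in particular carrying it out over $\RCA_0$. The naive argument would extract from the bounded case a single $\lambda$-value attained infinitely often, but infinite pigeonhole for an unbounded number of classes is not available over $\RCA_0$. The fix is the one used above: replace it by the finite pigeonhole principle applied to a sufficiently long \emph{initial segment} of $H$, which is available in $\RCA_0$. A secondary point requiring care is that the greedy recursion provably defines a total, strictly increasing, $\Delta^0_1(H)$ function, so that $H'$ exists as a set in $\RCA_0$.
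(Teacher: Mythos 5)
Your proof is correct, and it follows the same overall strategy as the paper's --- double the number of colours by adjoining one bit of ``least digit'' information to $f$, use homogeneity to show that two elements of $H$ cannot share (too small) a least exponent, then computably thin $H$ to an apart set --- but the key mechanism is genuinely different. The paper works in base $3$ and takes the extra bit to be the least nonzero base-$3$ digit $i(n)\in\{1,2\}$: if $h,h'\in H$ had $\lambda(h)=\lambda(h')$, then $i(h+h')=3-i(h)\neq i(h)$, contradicting homogeneity outright. This yields the stronger conclusion that $\lambda$ is \emph{injective} on $H$, with no pigeonhole argument and no auxiliary congruence needed; the price is that the construction naturally produces a $3$-apart set, so the paper invokes Corollary~\ref{cor:apart} to convert to $2$-apartness. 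Your base-$2$ variant, colouring by $\lambda(m)\bmod 2$, cannot conclude anything from $\lambda(h)=\lambda(h')$ alone (the sum's least exponent could jump by $2$ and preserve parity), which is exactly why you need the refinement to classes $(\lambda(h),a_h\bmod 4)$ and the finite pigeonhole principle to force $\lambda(h+h')=\ell+1$. In exchange, you get $2$-apartness directly, matching the literal statement of the lemma without the base-change corollary, and your closing remarks about carrying out finite pigeonhole and the greedy recursion within $\RCA_0$ are exactly the right points to flag; both arguments are visibly strong computable reductions.
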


\begin{proof}
We work in base 3 (this is without loss of generality by Corollary \ref{cor:apart}). 
Let $f:\Nat\to k$ be given. Let $i(n)$ denote the coefficient of the least term of $n$ written in base $3$. 
Define $g:\Nat\to 2k$ as follows.
\[
g(n):=
\begin{cases}
f(n) & \mbox{ if } i(n)=1,\\
k+f(n)& \mbox{ if } i(n)=2.\\
\end{cases}
\]
Let $H$ be an infinite set such that $\FS^{\leq n}(H)$  is homogeneous for $g$ of colour $\ell$.
For $h,h'\in \FS^{\leq n}(H)$ we have $i(h)=i(h')$.

We claim that for each $j\geq 0$
there is at most one $h\in H$ such that $\lambda(h)=j$. By way of contradiction
suppose otherwise, as witnessed by $h,h'\in H$. Then $i(h)=i(h')$ and $\lambda(h)=\lambda(h')$. Therefore
$i(h+h')= 3 - i(h) \neq i(h)$, but $h+h'\in \FS^{\leq n}(H)$. Contradiction. 

Using the claim, we can computably obtain a 3-apart infinite subset $H'$ of $H$.
\end{proof}

\section{Bounded Hindman vs.~Ramsey}\label{sec:aca}

In this section we first show that $\HT^{\leq 2}$ implies $\ACA_0$ (hence $\RT^3_2$) over $\RCA_0$. 
This improves on the main result of \cite{DJSW:16} that $\HT^{\leq 3}$ implies $\ACA_0$.
In particular we show that $\HT^{\leq 2}_4$ implies $\ACA_0$. In terms of finite unions our proof shows $\FUT^{\leq 2}_2$ implies $\ACA_0$.
This should also be compared with Corollary 2.3 and Corollary 3.4 of \cite{DJSW:16}, showing, respectively, that
$\HT^{\leq 2}_2$ implies the Stable Ramsey's Theorem $\SRT^2_2$ over the slightly stronger base theory $\RCA_0 + \mathsf{B}\Sigma^0_2$
or, equivalently, $\RCA_0 + \forall k \RT^1_k$).
Then we go on to prove that $\HT^{=3}_2$ with apartness is equivalent to 
$\ACA_0$. In terms of finite unions this shows that $\FUT^{=3}_2$ is equivalent to $\ACA_0$. 
Note that while $\HT^{=3}_2$ with apartness is easily reducible to $\RT^3_2$, 
it is unknown whether $\ACA_0$ (and thus $\RT^3_2$) implies $\HT^{\leq 2}_2$ over $\RCA_0$.

The lower bound proofs below are based on a significant simplification of the original argument of 
Blass, Hirst and Simpson \cite{Bla-Hir-Sim:87}.\footnote{
Blass, towards the end of \cite{Bla:05}, states without giving details that
inspection of the proof of the lower bound for $\HT$ in \cite{Bla-Hir-Sim:87} shows that 
this bound also holds for the restriction of the Finite Unions Theorem to unions of at most
two sets. While our Proposition \ref{prop:apht22_to_aca0} confirms this conclusion, we would like to stress that
from an inspection of the proof in \cite{Bla-Hir-Sim:87} one can glean that sums of 3 elements are sufficient. 
Indeed, while apparently only sums of 2 terms are used, in one crucial step one of the summands
is itself a sum of length 2. 
}

\subsection{Sums of at most two terms}

Let us recall that in $\RCA_0$ we have that for every $n$ there exists some $\ell$ such that 
for each $x < n$, $x \in \rg(f)$ if and only if $x \in \rg(f\restriction\ell)$.
This is a special case of a general principle known as strong $\Sigma^0_1$-collection (or 
strong $\Sigma^0_1$-bounding, see~\cite[Exercise II.3.14]{Sim:SOSOA},~\cite[Thm I.2.23 and Definition I.2.20]{Haj-Pud:book}).
This simple fact will be used in our lower bound arguments below. 

\begin{proposition}\label{prop:apht22_to_aca0}
$\HT^{\le 2}_2$ with apartness (eq. $\FUT^{\leq 2}_2$) implies $\ACA_0$ over $\RCA_0$.
\end{proposition}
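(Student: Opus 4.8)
The plan is to show that $\HT^{\le 2}_2$ with apartness lets us compute the range of an arbitrary injection $f:\Nat\to\Nat$, which suffices for $\ACA_0$ by the standard characterization (see \cite[Lemma III.1.3 and Theorem III.7.6]{Sim:SOSOA}). The guiding idea, inherited from \cite{Bla-Hir-Sim:87} and the simplification announced in the excerpt, is to encode the behaviour of $f$ into a $2$-colouring of $\Nat$ in such a way that any apart homogeneous solution for sums of length at most two is forced to reveal, for each threshold, whether a given number has entered the range of $f$ by some computable stage.

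\textbf{Setting up the colouring.} Working in base $2$, I would think of each $m \in \Nat$ via the block of exponents occurring in its binary expansion, so that an apart set $H$ corresponds to a block sequence and a sum $h + h'$ of two apart elements simply concatenates their exponent-blocks with no carries. This is exactly the finite-unions picture guaranteed by Proposition \ref{prop:ht_eq_fut}, and it is what makes sums of length two behave like disjoint unions of intervals. Using the strong $\Sigma^0_1$-collection fact recalled just before the Proposition, for each $n$ there is a least stage $\ell$ deciding range-membership below $n$; the colouring $d:\Nat \to 2$ will be defined so that its value on a number $m$ depends on comparing, across the two exponent-blocks that $m$'s binary expansion splits into, whether $f$ has enumerated into its range (below the lower block's content) some element indexed by the gap up to the upper block. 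Concretely, $d(m)$ should compare $\lambda(m)$ (equivalently the minimum element of the corresponding finite set) with the least $\ell$ witnessing range-membership decisions relative to $\mu(m)$, assigning colour $0$ or $1$ according to whether a targeted value has appeared by that stage.

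\textbf{Extracting the range.} Given an infinite $2$-apart $H = \{h_1 < h_2 < \dots\}$ with $\FS^{\le 2}(H)$ monochromatic for $d$, I would argue that the single colour class, together with the interplay between $\lambda$ and $\mu$ forced by apartness on the pairwise sums $h_i + h_j$, pins down range-membership. The key point is that for $i < j$ the sum $h_i + h_j$ has $\lambda(h_i + h_j) = \lambda(h_i)$ and $\mu(h_i+h_j) = \mu(h_j)$, so as $j$ grows the ``upper threshold'' $\mu$ tends to infinity while the ``lower witness stage'' stays anchored at $h_i$. Monochromaticity then says the comparison encoded by $d$ has a fixed outcome along this whole family, and this fixed outcome, read off from finitely much data about $H$, decides membership in $\rg(f)$ below any prescribed bound. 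Computing $\rg(f)$ from $H$ (and $f$) is then a matter of, for each candidate $x$, locating an $h_i$ whose block sits above $x$ and inspecting the colour, which is a computable search.

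\textbf{Main obstacle.} The delicate part is calibrating the colouring so that two colours genuinely suffice and so that the no-carry behaviour of apart sums is not spoiled when one summand is itself required to straddle a boundary --- this is precisely the subtlety flagged in the footnote about \cite{Bla-Hir-Sim:87}, where a length-$3$ sum hides inside the argument because one summand is itself a sum of two terms. With only sums of length at most two available, I must ensure the encoding never implicitly needs three blocks. The hard part will therefore be designing $d$ so that a single pairwise sum $h_i + h_j$ already exposes the relevant range-decision, rather than needing a triple; I expect this to require choosing the colour of $m$ to depend on an inequality between $\lambda(m)$ and a stage computed from $\mu(m)$ in a way that is automatically consistent across all admissible two-block concatenations, and verifying that consistency is where the real work lies.
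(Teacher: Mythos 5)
There is a genuine gap, and it sits exactly where you predicted it would: in the definition of the colouring. You have the correct scaffolding --- reducing $\ACA_0$ to computing the range of an injection, the block picture of apart sums, strong $\Sigma^0_1$-collection, and the idea of anchoring $\lambda$ at one element of $H$ while letting $\mu$ grow --- but the colouring you sketch is a threshold comparison (``has a targeted value appeared by a stage computed from $\mu(m)$?''), and that kind of colouring cannot carry the argument. First, as literally described it is not an admissible instance: the ``least $\ell$ witnessing range-membership decisions relative to $\mu(m)$'' is not computable from $f$ (knowing such an $\ell$ essentially amounts to knowing $\rg(f)$, which is the set we are trying to build), so in $\RCA_0$ your $d$ need not exist. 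Collection supplies $\ell$ only inside the verification, never inside the definition of the instance. Second, even a computable threshold colouring (``some value of $f$ below $\lambda(m)$ appears between two designated exponents of $m$'') faces two problems: a general $m\in\Nat$ has many blocks and no distinguished pair of them, and, more fundamentally, a solution that is monochromatic in the ``unhelpful'' colour is not by itself contradictory --- ruling that out is precisely the step where the argument of \cite{Bla-Hir-Sim:87} smuggles in a third summand.

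The missing idea is a \emph{parity count}. Writing $n = 2^{n_0} + \dots + 2^{n_r}$, call $j$ important in $n$ if some value of $f \restriction [n_{j-1},n_j)$ is below $n_0$ (with $n_{-1}:=0$), and set $g(n) := \card\{j : j \text{ important in } n\} \bmod 2$. This $g$ is computable from $f$ and defined on all of $\Nat$, with no distinguished splitting needed. The parity is what makes sums of length two suffice: if for some $n \in H$ the range below $\lambda(n)$ were not decided by stage $\mu(n)$, then choosing $m \in H$ with $\lambda(m) \ge \ell$ (here, and only here, one invokes the collection bound $\ell$) makes the single gap $[\mu(n),\lambda(m))$ important in $n+m$, leaves the status of every position of $n$ unchanged, and creates no important positions above it, since by injectivity of $f$ no value below $\lambda(n)$ appears at any stage $\ge \ell$. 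Hence $g(n+m) = 1 - g(n)$, and since both $n$ and $n+m$ lie in $\FS^{\le 2}(H)$, this contradicts monochromaticity \emph{whichever} colour $H$ has. It is this ``one extra summand flips the colour'' property that a threshold colouring lacks; without it, your plan stalls at the consistency check you yourself identified as the real work.
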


\begin{proof}
Assume $\HT^{\le 2}_2$ with apartness and consider an injective function
$f \colon \Nat \to \Nat$. We have to prove that the range of $f$ exists.\footnote{This is well-known to be equivalent to proving $\ACA_0$, see \cite[Lemma III.1.3 and Theorem III.7.6]{Sim:SOSOA}.} 

For a number $n$, written as  $2^{n_0} + \dots + 2^{n_r}$ in base $2$ notation, we call $j \in \{0,\ldots,r\}$ 
\emph{important in $n$} if some value of $f \restriction [n_{j-1},n_j)$ is below $n_0$. 
Here $n_{-1} := 0$. The colouring $g \colon \Nat \to 2$ 
is defined by
\[g(n) := \card\{j: j\textrm{ is important in } n\} \bmod 2.\]
Note that $g$ is computable relative to $f$.
By $\HT^{\le 2}_2$ with apartness, there exists an infinite set $H\subseteq\Nat$ such that $H$ is apart and $\FS^{\le 2}(H)$ 
is monochromatic w.r.t.\ $g$. We claim that for each $n \in H$ and each $x < \lambda(n)$, $x \in \rg(f)$ if and only if $x \in \rg(f \restriction \mu(n))$. 
This will give us an algorithm for deciding whether any given $x$ is in the range of $f$: find the smallest $n \in H$ such that
$x < \lambda(n)$ and check whether $x$ is in $\rg(f \restriction \mu(n))$.

It remains to prove the claim. 
In order to do this, consider $n \in H$ and assume that there is some element below $n_0 = \lambda(n)$ in 
$\rg (f) \setminus \rg(f \restriction \mu(n))$. 

Let $\ell$ be such that for each $x < \lambda(n)$, $x \in \rg(f)$ if and only if $x \in \rg(f\restriction\ell)$.
By apartness, and the fact that $H$ is infinite, there is $m \in H$ with $\lambda(m) \ge \ell > \mu(n)$. 
Write $n+ m$ in base $2$ notation,
\[n+m =  2^{n_0} + \dots + 2^{n_r} + 2^{n_{r+1}}  +  \dots + 2^{n_s},\]
where $n_0 = \lambda(n) = \lambda(n+m)$, $n_r = \mu(n)$, and $n_{r+1} = \lambda(m)$.  Clearly,
$j \le s$ is important in $n+m$ if and only if either (i) $j \le r$ and $j$ is important in $n$ or (ii) $j = r+1$; hence, $g(n) \neq g(n+m)$.
This contradicts the assumption that $\FS^{\le 2}(H)$ 
is monochromatic, thus proving the claim.   
\end{proof}

\begin{theorem}\label{thm:ht24_to_aca0}
$\HT^{\leq 2}_4$ implies $\ACA_0$ over  $\RCA_0$.
\end{theorem}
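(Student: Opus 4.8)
The plan is to chain the two results just established: Lemma~\ref{lem:leqn_to_apart} lets us trade extra colours for the apartness condition, and Proposition~\ref{prop:apht22_to_aca0} then extracts $\ACA_0$ from the apart version with two colours. Both steps are implications over $\RCA_0$, so their composition is again an implication over $\RCA_0$, which is exactly the conclusion we want. In effect the combinatorial heart of the theorem has already been done, and all that remains is to line up the parameters.

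Concretely, I would first instantiate Lemma~\ref{lem:leqn_to_apart} at $n=2$ and $k=2$. Since $2k=4$, this yields that $\HT^{\leq 2}_4$ implies $\HT^{\leq 2}_2$ with apartness over $\RCA_0$ (and in fact via a strong computable reduction). I would then invoke Proposition~\ref{prop:apht22_to_aca0}, which states precisely that $\HT^{\leq 2}_2$ with apartness implies $\ACA_0$ over $\RCA_0$. Composing the two implications gives $\HT^{\leq 2}_4 \to \ACA_0$ over $\RCA_0$, completing the proof.

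Since all the real work is isolated in the lemma and the proposition, I do not expect any genuine obstacle here; the only point to verify is that halving the four colours in Lemma~\ref{lem:leqn_to_apart} lands exactly on the two-colour apart principle handled by Proposition~\ref{prop:apht22_to_aca0}, which it does for $n=2,\ k=2$. One remark worth recording is that the composite reduction is \emph{not} a strong computable reduction: the colour-reduction step is, but the proof of Proposition~\ref{prop:apht22_to_aca0} relies on strong $\Sigma^0_1$-collection (to locate the threshold $\ell$) and is therefore an honest $\RCA_0$ implication rather than a purely combinatorial reduction. So the theorem is stated as an implication over $\RCA_0$, not as $\ACA_0 \leq_{\mathrm{sc}} \HT^{\leq 2}_4$.
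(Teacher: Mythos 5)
Your proof is correct and is exactly the paper's own argument: the paper proves Theorem~\ref{thm:ht24_to_aca0} by composing Lemma~\ref{lem:leqn_to_apart} (at $n=2$, $k=2$) with Proposition~\ref{prop:apht22_to_aca0}, citing Corollary~\ref{cor:apart} only to reconcile the $3$-apartness produced inside the lemma's proof with the $2$-apartness convention, a point your appeal to the lemma's stated conclusion handles implicitly. Your closing remark is also consonant with the paper, which notes that it is unknown whether this implication can be witnessed by a strong computable reduction.
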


\begin{proof}
By Proposition~\ref{prop:apht22_to_aca0}, Lemma~\ref{lem:leqn_to_apart} and Corollary \ref{cor:apart}.
\end{proof}

\subsubsection{Sums of exactly three terms, with apartness}

We next extend the argument in Proposition~\ref{prop:apht22_to_aca0} to show that 
$\HT^{=3}_2$ with apartness implies $\ACA_0$ (hence $\RT^3_2$) over $\RCA_0$. 
Since $\HT^{=3}_2$ with apartness is also easily deducible from $\RT^3_2$, we obtain an equivalence.
Note that no lower bounds on $\HT^{=3}_2$ {\em without} apartness are known.

\begin{theorem}\label{thm:hteq3_to_aca0}
$\HT^{= 3}_2$ with apartness (eq., $\FUT^{=3}_2$) is equivalent to $\RT^3_2$ over $\RCA_0$.
\end{theorem}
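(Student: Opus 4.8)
The statement packages two implications. One direction, $\RT^3_2 \Rightarrow \HT^{=3}_2$ with apartness, is the routine reduction already sketched in the introduction: given $f \colon \Nat \to 2$, set $c(a_1,a_2,a_3) := f(a_1+a_2+a_3)$ for $a_1<a_2<a_3$, run $\RT^3_2$ relative to an infinite $2$-apart set $A$ (which exists computably), and observe that an infinite $X\subseteq A$ monochromatic for $c$ is an apart solution to $\HT^{=3}_2$ for $f$. So the whole content is the lower bound $\HT^{=3}_2$ with apartness $\Rightarrow \ACA_0$; since $\ACA_0$ is equivalent to $\RT^3_2$ over $\RCA_0$, this yields the claimed equivalence. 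As in Proposition~\ref{prop:apht22_to_aca0}, I would prove the lower bound by taking an arbitrary injection $f \colon \Nat \to \Nat$ and showing, using $\HT^{=3}_2$ with apartness, that $\rg(f)$ exists.

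The plan is to reuse the machinery of Proposition~\ref{prop:apht22_to_aca0}: working in a fixed base, build an $f$-computable colouring $g \colon \Nat \to 2$ that reads off, from the exponents of $n$, whether $f$ takes a value below $\lambda(n)$ at certain high positions of the exponent-support of $n$ (the natural analogue of the ``important'' blocks, now tuned so that the detection region lies above the lower blocks of $n$). Applying $\HT^{=3}_2$ with apartness to $g$ produces an infinite apart $H=\{h_1<h_2<\dots\}$ with $\FS^{=3}(H)$ monochromatic, of some colour $\chi$. The target is the same claim as before: for every $a=h_i \in H$ and every $x<\lambda(a)$, $x\in\rg(f)$ if and only if $x\in\rg(f\restriction N_a)$ for a bound $N_a$ computable from $H$ (for instance $N_a = \mu(h_{i+1})$, the top exponent of the successor of $a$ in $H$). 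This claim immediately gives a decision procedure for $\rg(f)$: to test $x$, find the least $h_i\in H$ with $x<\lambda(h_i)$ and check membership in the finite set $\rg(f\restriction N_{h_i})$; the required instance of strong $\Sigma^0_1$-collection is available in $\RCA_0$, exactly as recalled before Proposition~\ref{prop:apht22_to_aca0}.

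The argument then splits on the value of $\chi$, and this is where sums of length exactly three force a genuinely new idea. In Proposition~\ref{prop:apht22_to_aca0} the contradiction came from comparing a length-one sum $n$ with a length-two sum $n+m$: the extra high block contributes a single new important gap, flipping the parity by exactly one. With only sums of length exactly three available there is no length-one reference, and one cannot isolate a single flipping gap this way. Instead I would fix the least block $h_1$ (so the threshold $\lambda(h_1)$ is fixed) and consider the sums $h_1 + h_{j} + h_{k}$ with $j<k$; by apartness their supports split into three separated blocks, and the detection region of $g$ is the span lying strictly above the middle block, namely $[\mu(h_j),\mu(h_k))$. If $\chi$ is the colour asserting ``a value below $\lambda(h_1)$ is detected above the middle block'', then every such span carries a preimage of a value below $\lambda(h_1)$; choosing pairs $(j,k)=(2,3),(4,5),(6,7),\dots$ makes these spans pairwise disjoint and unbounded, producing infinitely many distinct positions each carrying a value below the fixed bound $\lambda(h_1)$. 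Since $f$ is injective and only finitely many values lie below $\lambda(h_1)$, this is impossible, so $\chi$ is the other colour. That colour says no value below $\lambda(h_1)$ is detected above the middle block of any $h_1+h_j+h_k$; letting $j,k$ run through consecutive elements of $H$, the spans $[\mu(h_{m}),\mu(h_{m+1}))$ tile a final segment of $\Nat$, whence no value below $\lambda(h_1)$ has a preimage beyond $\mu(h_2)$ — the claim for $a=h_1$. Rerunning with $h_i$ as least block gives the claim for all $a\in H$.

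The main obstacle, and the crux of the proof, is to define $g$ so that it is simultaneously (i) $f$-computable, hence a legitimate instance of $\HT^{=3}_2$, which forces the detection predicate to depend only on values of $f$ on finitely many explicitly given positions (the exponent-gaps of $n$) rather than on global range information; (ii) block-agnostic, since $g$ is fixed before $H$ is known and cannot recognise where one block of an apart sum ends and the next begins; and yet (iii) strong enough that monochromaticity pins down the behaviour of $f$ at all sufficiently high positions, which is what makes both the injectivity refutation of the bad colour and the tiling argument for the good colour go through. Reconciling (ii) and (iii) is the delicate point: the detection region must have a lower endpoint that grows with the sum (to furnish the disjoint high spans needed for the injectivity contradiction), while the family of regions, as the sum varies over $H$, must still cover a final segment (to yield full stabilisation). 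I expect this to be the step requiring the most care, and the injectivity/finiteness count — absent from the length-$\le 2$ argument, where a single parity flip sufficed — to be the essential new ingredient.
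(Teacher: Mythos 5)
Your upper-bound direction is fine and matches the paper. The lower bound, however, has a genuine gap: the colouring $g$ is never defined, and you yourself flag its construction as ``the step requiring the most care''. That step is not a detail to be filled in later --- it is the entire content of the theorem, and your desiderata (ii) and (iii) are in real conflict. Your colour dichotomy and both of its branches (the injectivity count ruling out the ``detection'' colour, the tiling argument for the other) presuppose that $g(h_1+h_j+h_k)$ expresses ``some value of $f$ on the span above the middle block is below $\lambda(h_1+h_j+h_k)$''. But $g$ is fixed before $H$ exists and sees only the binary expansion of $n$: it cannot distinguish a gap lying between two elements of $H$ from a gap internal to a single element, so ``the middle block'', and hence ``the span above it'', are simply not functions of $n$. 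No single detection bit of the kind your argument needs can be read off block-agnostically, which is why the outline does not go through as stated.

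The paper's proof shows what the missing idea actually is, and it is not a new colouring: it keeps exactly the parity colouring of Proposition~\ref{prop:apht22_to_aca0} (parity of the number of important positions), which is block-agnostic precisely because it counts over \emph{all} gaps, internal or not, and it replaces the missing length-$1$ reference by $2$-element sums of the right colour. Concretely, with $r$ the colour of $\FS^{=3}(H)$: (searching) for every $n \in H$ there is $k \in H$, $n<k$, with $g(n+k)=r$ --- take $\lambda(k)$ above the strong $\Sigma^0_1$-collection bound $\ell$ for $\lambda(n)$, so that no gap above $\mu(k)$ is important in $n+k+m$ and hence $g(n+k)=g(n+k+m)=r$ for any larger $m \in H$; (deciding) for any such pair, every $x<\lambda(n)$ lies in $\rg(f)$ iff it lies in $\rg(f\restriction\mu(k))$, since a counterexample would make the single gap $[\mu(k),\lambda(m))$ important for suitable $m\in H$ with $\lambda(m)>\ell$, flipping the parity and forcing $g(n+k+m)\neq g(n+k)=r$, contradicting monochromaticity. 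Membership in $\rg(f)$ is then decided by searching $H$ for such a pair. In particular, the ``essential new ingredient'' is not an injectivity/finiteness count over infinitely many disjoint spans (injectivity enters only locally, just as it already does, implicitly, in the length-$\leq 2$ argument); it is the use of pairs of the correct colour as surrogate references, so that the proof only ever compares a sum with a one-element extension of it.
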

\begin{proof}
The upper bound, that is the implication from $\RT^3_2$
to $\HT^{= 3}_2$ with apartness, follows by applying the argument
proving $\HT^{=n}_k$ from $\RT^n_k$ sketched in Section \ref{sec:intro}.
Thus, it remains to prove the lower bound.

We argue in the base theory $\RCA_0$ assuming $\HT^{= 3}_2$ with apartness. 
Consider an injective function 
$f \colon \Nat \to \Nat$. We have to prove that the range of $f$ exists.
The relation $j$ {\em is important in} $n$ and the colouring $g:\Nat\to 2$
are defined as in the proof of Proposition~\ref{prop:apht22_to_aca0}.

By $\HT^{=3}_2$ with apartness, 
there exists an infinite set $H$ such that $H$ is apart and $\FS^{=3}(H)$ 
is monochromatic w.r.t.\ $g$. Let $r<2$ be the colour of $\FS^{=3}(H)$ under $g$. 
We describe a method for algorithmically deciding membership in the 
range of $f$ relative to the set $H$. 

\begin{claim}\label{lem:deciding}
For each $n,k\in H$. If $n<k$ and $g(n+k)=r$ then for each $x<\lambda(n)$,
\[
x\in \rg(f) \iff x\in \rg(f\restriction \mu(k)).
\]
\end{claim}
\noindent To prove Claim \ref{lem:deciding}, let $n,k\in H$ be such that $n<k$ and $g(n+k)=r$.
As in the proof of Proposition~\ref{prop:apht22_to_aca0}, let $\ell$ be such that 
for all $x<\lambda(n)$, 
\[
x\in \rg(f) \iff x\in \rg(f\restriction \ell).
\]
Then, take $m\in H$ such that $\lambda(m)>\ell$.
Now, if $x\in \rg(f) \setminus \rg(f\restriction \mu(k))$ for some $x<\lambda(n)$,
then the number of important digits in $n+k+m$ is greater by one
than the number of important digits in $n+k$. Then,
$g(n+k+m)=1-g(n+k)=1-r$ which contradicts the fact that $r$ is 
the colour of $\FS^{=3}(H)$. Thus, Claim \ref{lem:deciding} is proved.

\begin{claim}\label{lem:searching}
For each $n\in H$ there exists $k\in H$ such that  $n<k$ and $g(n+k)=r$.
\end{claim}
\noindent To prove Claim \ref{lem:searching},
fix $n$  and, again,
let $\ell$ be such that for all $x<\lambda(n)$,
\[
x\in \rg(f) \iff x\in \rg(f\restriction \ell).
\]
Take any $k\in H$ such that $\lambda(k)>\ell$.
For any $m\in H$, if $k<m$, then  $g(n+k)=g(n+k+m)=r$.
This proves Claim \ref{lem:searching}.

We now describe an algorithm for deciding membership in $\rg(f)$
given access to $H$.
For an input $x$, find $n\in H$ such that $x<\lambda(n)$.
Then, find $k\in H$ such that $n<k$ and $g(n+k)=r$. By Claim~\ref{lem:searching}
this part of computation ends successfully. 
Finally, check whether $x\in \rg(f\restriction \mu(k))$. 
By Claim~\ref{lem:deciding} this is equivalent to $x\in\rg(f)$.
\end{proof}

Let us conclude this section with some remarks on the relations between the principles
$\HT^{=n}_k$ with apartness and $\HT^{=\ell}_p$ with apartness for arbitrary $n,\ell,k,p\geq 2$. 
Prima facie it is not obvious that, say, $\HT^{=3}_2$ with apartness implies $\HT^{=2}_8$ with apartness. 
Yet the proofs of our results above allow us to show that some of these principles are equivalent over 
$\RCA_0$.

\begin{proposition}\label{prop:exactsums}
For each $n\geq 3$ and $k > 1$, $\HT^{=3}_2$ with apartness is equivalent to 
$\HT^{=n}_k$ with apartness over $\RCA_0$.
\end{proposition}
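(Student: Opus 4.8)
The plan is to show that, for every $n\geq 3$ and $k\geq 2$, the principle $\HT^{=n}_k$ with apartness is equivalent to $\ACA_0$. Since Theorem~\ref{thm:hteq3_to_aca0} shows that $\HT^{=3}_2$ with apartness is equivalent to $\RT^3_2$, and $\RT^3_2$ is equivalent to $\ACA_0$, this immediately yields the claimed equivalence between $\HT^{=3}_2$ with apartness and $\HT^{=n}_k$ with apartness. The upper bound is straightforward: $\ACA_0$ proves $\RT^n_k$, and the reduction of $\HT^{=n}_k$ to $\RT^n_k$ sketched in Section~\ref{sec:intro} produces an apart solution once one applies $\RT^n_k$ to the induced colouring of $n$-tuples restricted to an infinite apart set. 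Hence $\ACA_0$ proves $\HT^{=n}_k$ with apartness. For the lower bound it suffices to treat $k=2$, since a $2$-colouring is in particular a $k$-colouring, so $\HT^{=n}_k$ with apartness trivially implies $\HT^{=n}_2$ with apartness over $\RCA_0$.

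The core of the argument is therefore to prove that $\HT^{=n}_2$ with apartness implies $\ACA_0$ for every $n\geq 3$, by adapting the proof of Theorem~\ref{thm:hteq3_to_aca0}. First I would keep exactly the same set-up: given an injective $f\colon\Nat\to\Nat$, use the colouring $g\colon\Nat\to 2$ recording the parity of the number of important digits, and apply $\HT^{=n}_2$ with apartness to obtain an infinite apart $H$ with $\FS^{=n}(H)$ monochromatic of some colour $r$. The only structural change is that the base over which one toggles is now a sum of $n-1$ distinct elements of $H$ rather than of two: for $a_1<\dots<a_{n-1}$ in $H$ one works with $s=a_1+\dots+a_{n-1}$ and adjoins a single higher element $m\in H$, so that $s+m\in\FS^{=n}(H)$ and hence $g(s+m)=r$.

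With this change the two claims from the proof of Theorem~\ref{thm:hteq3_to_aca0} go through essentially verbatim. The analogue of Claim~\ref{lem:deciding} states that whenever $g(s)=r$ then, for every $x<\lambda(a_1)$, one has $x\in\rg(f)$ if and only if $x\in\rg(f\restriction\mu(a_{n-1}))$ (with $a_{n-1}$ arbitrary). The proof is unchanged: choosing $m$ with $\lambda(m)$ above the collection bound $\ell$ for $\lambda(a_1)$, a putative witness $x\in\rg(f)\setminus\rg(f\restriction\mu(a_{n-1}))$ forces the first exponent of $m$ to become important in $s+m$ while leaving the importance of every digit of $s$ intact and creating no other important digit, raising the count by exactly one and contradicting $g(s+m)=g(s)=r$. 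The analogue of Claim~\ref{lem:searching} asserts that for each $a_1\in H$ there is a base with $g(s)=r$; here one uses apartness and infinity of $H$ to pick $a_2<\dots<a_{n-1}$ with $\lambda(a_{n-1})>\ell$, so that any further $m$ contributes no important digit and $g(s)=g(s+m)=r$. The decision procedure relative to $H$ is then as in Theorem~\ref{thm:hteq3_to_aca0}: to decide $x$, find $a_1\in H$ with $x<\lambda(a_1)$, search $H$ for a base with $g(s)=r$ (which exists by the second claim and is located without any knowledge of $\ell$), and test membership in $\rg(f\restriction\mu(a_{n-1}))$.

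I expect the only delicate point to be the searching claim, and this is exactly where the hypothesis $n\geq 3$ is used. With $n-1\geq 2$ base terms one can reserve the top term $a_{n-1}$ as a buffer whose least exponent exceeds $\ell$, guaranteeing that adjoining a higher $m$ creates no new important digit; for $n=2$ the base is the single element $a_1$ and no such buffer is available, which is precisely why the method gives no $\ACA_0$ lower bound there. The remaining verifications — that adjoining one element whose base-$2$ block lies entirely above $\ell$ changes the important-digit count by exactly zero or one, and that the procedure is computable from $H$ — are identical to the case $n=3$ and rest on apartness, injectivity of $f$, and strong $\Sigma^0_1$-collection, exactly as in Proposition~\ref{prop:apht22_to_aca0} and Theorem~\ref{thm:hteq3_to_aca0}.
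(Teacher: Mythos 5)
Your proof is correct and takes essentially the same route as the paper: the upper bound via $\ACA_0 \vdash \RT^n_k \vdash \HT^{=n}_k$ with apartness, the trivial reduction of the lower bound to the case $k=2$, and an adaptation of the proof of Theorem~\ref{thm:hteq3_to_aca0} in which the pair $n,k$ of that argument is replaced by a base sum of $n-1$ elements of $H$ to which a single higher element $m$ is adjoined. The paper compresses this adaptation into the phrase ``obviously shows'' (with the replacement spelled out only later, in Section~\ref{sec:misc}); your write-up supplies the missing details correctly, including the accurate observation that the searching claim is exactly where $n\geq 3$ is needed.
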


\begin{proof}
The proof of Theorem \ref{thm:hteq3_to_aca0} obviously shows that, for $n\geq 3$, 
$\HT^{=n}_2$ with apartness implies $\ACA_0$ over $\RCA_0$. 
On the other hand, for each $n\geq 1$, $\RT^n_k$ implies $\HT^{=n}_k$ with apartness. 
Finally, it is known that for each $n \geq 3$ and $k\geq 2$, the principle
$\RT^n_k$ is equivalent to $\ACA_0$ over $\RCA_0$. Thus, $\ACA_0$ implies
$\HT^{=n}_k$ with apartness. This concludes the proof.
\end{proof}

We finally observe that, in some cases an implication from $\HT^{=m}_k$ to $\HT^{=n}_k$ (with $m > n$)
can be witnessed by a strong computable reduction. 

\begin{proposition}
For any $n,m\geq 2$ and $k\geq 2$, 
if $n$ divides $m$ then $\HT^{=n}_k$ is strongly computably reducible to $\HT^{=m}_k$.
\end{proposition}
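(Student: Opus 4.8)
The plan is to exhibit an explicit strong computable reduction based on the observation that a sum of exactly $m$ terms can be regrouped as a sum of exactly $n$ blocks, each block itself being a sum of $d := m/n$ terms. Since $n$ divides $m$, the number $d$ is a positive integer, and this divisibility is precisely what the argument requires.

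Given an instance $f \colon \Nat \to k$ of $\HT^{=n}_k$, I would take $g := f$ as the associated instance of $\HT^{=m}_k$; this transformation is trivially computable from $f$. Now suppose $H = \{h_1 < h_2 < \cdots\}$ is a solution to $\HT^{=m}_k$ for $g$, that is, an infinite set with $\FS^{=m}(H)$ monochromatic for $f$. I partition $H$ into consecutive blocks of length $d$ and let $b_i := h_{(i-1)d+1} + \cdots + h_{id}$ be the $i$-th block sum; then I set $H' := \{b_1, b_2, \dots\}$. The set $H'$ is computable from $H$ and is infinite, and it is strictly increasing, since a term-by-term comparison of $b_i$ and $b_{i+1}$ shows that $b_i < b_{i+1}$.

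The key point is the inclusion $\FS^{=n}(H') \subseteq \FS^{=m}(H)$. Indeed, an arbitrary element of $\FS^{=n}(H')$ has the form $b_{i_1} + \cdots + b_{i_n}$ for distinct indices $i_1 < \cdots < i_n$; expanding each $b_{i_j}$ back into its $d$ summands drawn from $H$, and using that blocks with distinct indices involve pairwise disjoint elements of $H$, this is a sum of exactly $nd = m$ distinct elements of $H$, hence lies in $\FS^{=m}(H)$. Since $\FS^{=m}(H)$ is monochromatic for $f = g$, it follows that $\FS^{=n}(H')$ is monochromatic for $f$, so $H'$ is a solution to $\HT^{=n}_k$ for $f$. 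The maps $f \mapsto g$ and $H \mapsto H'$ thus witness $\HT^{=n}_k \leq_{\mathrm{sc}} \HT^{=m}_k$.

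I do not anticipate a serious obstacle. The only step needing a little care is the bookkeeping behind the inclusion $\FS^{=n}(H') \subseteq \FS^{=m}(H)$: one must check that regrouping preserves both the exact count ($nd = m$) and the distinctness of the underlying terms, but both are immediate from the consecutive-block partition. Verifying that $H'$ is infinite and strictly increasing is likewise routine, following at once from $H$ being infinite and increasing.
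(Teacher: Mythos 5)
Your proof is correct and follows exactly the paper's argument: keep the colouring unchanged, take a solution $H$ to $\HT^{=m}_k$, and form the sums of $d = m/n$ consecutive terms of $H$, so that sums of exactly $n$ of these block sums are sums of exactly $m$ distinct elements of $H$. The paper states this in three lines without the verifications you spell out (disjointness of blocks, monotonicity of the block sums), so your write-up is just a more careful version of the same reduction.
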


\begin{proof}
Let $f:\Nat \to k$. Let $m = nd$. Let $H = \{ h_1, h_2, \dots \}$ with $h_1 < h_2 < \dots$ 
be a solution for the instance $f$ of $\HT^{=m}_k$.
Let $H^+$ consist of the sums of $d$ many consecutive terms of $H$, i.e., 
$H^+ = \{ h_1 + \dots + h_d, h_{d+1}+\dots + h_{2d+1},\dots\}$. Then $\FS^{=n}(H^+)$ is monochromatic.
\end{proof}

\section{Bounded Hindman and Polarized Ramsey}\label{sec:ipt}

We here consider the principle $\HT^{\leq 2}$ from Question 12 of \cite{Hin-Lea-Str:03}
from the point of view of strong computable reductions. 
Before our Theorem \ref{thm:ht24_to_aca0} the only known lower bounds on 
$\HT^{\leq 2}_k$ principles were those of Dzhafarov et al. \cite{DJSW:16} 
showing that $\HT^{\leq 2}_2$ is not provable in the base theory $\RCA_0$ and that 
the Stable Ramsey's Theorem for pairs $\SRT^2_2$ follows from $\HT^{\leq 2}_2 + \mathsf{B}\Sigma^0_2$. 
$\SRT^2_2$ is just Ramsey's Theorem for $2$-colourings of $[\Nat]^2$ restricted to 
colourings -- called {\em stable colourings} -- that eventually stabilize with respect to the second coordinate.

In this section we uncover a tight connection between $\HT^{\leq 2}$ and the 
Increasing Polarized Ramsey's Theorem for pairs $\IPT^2_2$ introduced by 
Dzhafarov and Hirst in \cite{Dza-Hir:11}, which is known to be strictly stronger than $\SRT^2_2$
(Corollary 4.12 of \cite{Pat:rain}). We show that $\IPT^2_2$ is strongly computably reducible to $\HT^{\leq 2}_4$.
As a sheer implication, this is weaker than the one from $\HT^{\leq 2}_4$ to $\RT^3_2$
in our Theorem \ref{thm:ht24_to_aca0}. However we do not know whether the 
latter can be witnessed by a strong computable reduction.

We start by recalling the definition of the Increasing Polarized Ramsey's Theorem. Let $\Nat_0$ denote $\Nat\cup\{0\}$.
\begin{definition}[Increasing Polarized Ramsey's Theorem]\label{defi:ipt} For a pair of positive integers $n$ and $k$, $\IPT^n_k$
is the following principle. 
\begin{quote}
Whenever $[\Nat_0]^n$ is $k$-coloured then there exists a sequence $(H_1,\dots,H_n)$ of infinite subsets of $\Nat$ 
such that all edges of the form $\{x_1,\dots,x_n\}$ with $x_1<\dots < x_n$, $x_i\in H_i$ have the same colour. 
\end{quote}
\end{definition}
A sequence of sets $H_1,\dots,H_n$ satisfying the above homogeneity property is referred to as an \emph{increasing  p-homogeneous sequence}.
$\IPT^2_2$ can be read as the following restriction of $\RT^2_2$: given a 2-colouring of the complete graph on $\Nat$, 
we look for an infinite bipartite graph whose forward edges all have the same colour.
It is not known whether $\IPT^2_2$ is strictly weaker than $\RT^2_2$.

We first show that $\IPT^2_2$ reduces in the sense of $\leq_{\mathrm{sc}}$ to $\HT^{=2}_2$ with apartness.
This should be contrasted with the fact that no lower bounds on $\HT^{=2}_2$ {\em without} apartness are known.

\begin{theorem}\label{thm:hteq22_to_ipt22}
For any $t\geq 2$, $\IPT^2_2$ is strongly computably reducible to $\HT^{= 2}_2$ with apartness.
\end{theorem}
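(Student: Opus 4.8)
The plan is to exhibit a strong computable reduction directly, working in base $t$ and abbreviating $\lambda = \lambda_t$, $\mu = \mu_t$. Given an instance $c \colon [\Nat_0]^2 \to 2$ of $\IPT^2_2$, I would define an instance $f \colon \Nat \to 2$ of $\HT^{=2}_2$ with $t$-apartness by reading each $m \in \Nat$ through its set of base-$t$ exponents and colouring it according to its least and greatest exponent:
\[
f(m) := \begin{cases} c(\lambda(m), \mu(m)) & \text{if } \lambda(m) < \mu(m),\\ 0 & \text{if } \lambda(m) = \mu(m).\end{cases}
\]
Since $\lambda(m)$ and $\mu(m)$ are read off the base-$t$ representation of $m$, the map $f$ is computable from $c$; this is the forward half of the reduction.

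The key combinatorial point is that $t$-apartness prevents carrying. If $H = \{h_1 < h_2 < \cdots\}$ is $t$-apart and $i < j$, then the base-$t$ supports of $h_i$ and $h_j$ are disjoint and separated (as $\mu(h_i) < \lambda(h_j)$), so the support of $h_i + h_j$ is exactly the union of the two supports. Hence $\lambda(h_i + h_j) = \lambda(h_i)$ and $\mu(h_i + h_j) = \mu(h_j)$, and in particular $\lambda(h_i) < \mu(h_j)$, so that
\[
f(h_i + h_j) = c(\lambda(h_i), \mu(h_j)).
\]
Now suppose $H$ is a $t$-apart solution to $\HT^{=2}_2$ for $f$, with $\FS^{=2}(H)$ monochromatic of colour $r$. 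Then the displayed identity gives $c(\lambda(h_i), \mu(h_j)) = r$ for all $i < j$. Note also that $i \mapsto \lambda(h_i)$ and $i \mapsto \mu(h_i)$ are strictly increasing, since $\lambda(h_i) \le \mu(h_i) < \lambda(h_{i+1})$ by apartness.

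To recover an increasing $p$-homogeneous sequence I would set $H_1 := \{\lambda(h_i) : i \text{ odd}\}$ and $H_2 := \{\mu(h_j) : j \text{ even}\}$; both are infinite and computable from $H$, which is the backward half of the reduction. For the verification, take $x_1 \in H_1$ and $x_2 \in H_2$ with $x_1 < x_2$, say $x_1 = \lambda(h_i)$ with $i$ odd and $x_2 = \mu(h_j)$ with $j$ even. If $j < i$, then apartness gives $x_2 = \mu(h_j) < \lambda(h_i) = x_1$, contradicting $x_1 < x_2$; hence $i \le j$, and the different parities of $i$ and $j$ force $i < j$. Therefore $c(x_1,x_2) = c(\lambda(h_i), \mu(h_j)) = r$, so $(H_1, H_2)$ is an increasing $p$-homogeneous pair for $c$.

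The one genuine obstacle is the \emph{diagonal}: for an index $i$ with $\lambda(h_i) < \mu(h_i)$ the edge $\{\lambda(h_i), \mu(h_i)\}$ is a legal forward edge, yet its colour $c(\lambda(h_i), \mu(h_i))$ is not controlled by monochromaticity of $\FS^{=2}(H)$, since $h_i$ is not a sum of two distinct elements of $H$. Splitting $H_1$ and $H_2$ across odd and even indices eliminates exactly these collisions, which is why the parity device is needed rather than simply taking all $\lambda$-values and all $\mu$-values. Two minor points remain. First, the exponent $0$ can occur at most once (as $\lambda(h_1) \in H_1$), and discarding it keeps $H_1 \subseteq \Nat$ while leaving it infinite. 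Second, the whole argument is uniform in $t$ and so establishes the statement for every $t \ge 2$; alternatively one reduces to the case $t = 2$ via Corollary~\ref{cor:apart}.
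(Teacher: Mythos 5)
Your proposal is correct and follows essentially the same route as the paper's proof: the same colouring $f$ (defaulting to $0$ on powers of the base), the same odd/even parity split into $H_1 = \{\lambda(h_i) : i \text{ odd}\}$ and $H_2 = \{\mu(h_j) : j \text{ even}\}$, and the same verification that apartness forces $i < j$ and prevents carrying, so that $f(h_i + h_j) = c(\lambda(h_i), \mu(h_j))$. The only cosmetic differences are that you argue uniformly in the base $t$ (the paper fixes $t = 2$ and invokes Corollary~\ref{cor:apart} implicitly) and that you discard a possible exponent $0$ from $H_1$, where the paper instead assumes $0 < \lambda(h_1)$ by dropping $h_1$ if necessary.
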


\begin{proof}
Let $c:[\Nat_0]^2\to 2$ be given. Define $f:\Nat\to 2$ as follows.
\[
f(n):=
\begin{cases}
0 & \mbox{ if  } n = 2^m \mbox{ for some } m,\\
c(\lambda(n),\mu(n)) & \mbox{ otherwise.}\\
\end{cases}
\]
Note that $f$ is well-defined since $\lambda(n)<\mu(n)$ if $n$ is not of the form $2^m$. 
Let $H=\{h_1 < h_2 < \dots\}\subseteq\Nat$ witness $\HT^{= 2}_2$ with apartness for $f$.
Note that (by the apartness condition) we can assume without loss of generality that $0<\lambda(h_1)$.
Let 
\[H_1 := \{\lambda(h_{2i-1})\,:\, i \in \Nat\},
\; H_2 := \{\mu(h_{2i})\,:\, i \in \Nat\}.\] 
We claim that $(H_1,H_2)$ is a solution to $\IPT^2_2$ for $c$. 

First observe that we have
\[H_1 = \{ \lambda(h_1), \lambda(h_3), \lambda(h_5), \dots\}, \; H_2 = \{ \mu(h_2), \mu(h_4), \mu(h_6),  \dots\},\] 
with $\lambda(h_1) < \lambda(h_3) < \lambda(h_5) < \dots$  and $\mu(h_2) < \mu(h_4) < \mu(h_6) < \dots.$
This is so because $\lambda(h_1)\leq\mu(h_1)<\lambda(h_2)\leq\mu(h_2)<\dots$ by the apartness condition. 
Let the colour of $\FS^{= 2}(H)$ under $f$ be $k < 2$. We claim that $c(x_1,x_2) = k$ for every increasing pair $(x_1,x_2)\in H_1\times H_2$.
Note that $(x_1,x_2) = (\lambda(h_i),\mu(h_j))$ for some $i < j$ (the case $i=j$ is impossible by 
construction of $H_1$ and $H_2$). We have
\[c(x_1,x_2) = c(\lambda(h_i),\mu(h_j)) = c(\lambda(h_i+h_j),\mu(h_i+h_j)) = f(h_i+h_j) = k,\]
since $\FS^{= 2}(H)$ is monochromatic for $f$ with colour $k$.
This shows that $(H_1,H_2)$ is an increasing p-homogeneous sequence for $c$.
\end{proof}

\begin{corollary}\label{cor1}
$\IPT^2_2$ is strongly computably reducible to $\FUT^{\leq 2}_2$ and to $\HT^{\leq 2}_4$.
\end{corollary}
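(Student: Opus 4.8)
The plan is to establish both reductions purely by \emph{composing} strong computable reductions that are already available in the excerpt, exploiting the fact that $\leq_{\mathrm{sc}}$ is transitive. (This transitivity is a standard and routine check: if a reduction of $\mathsf{Q}$ to $\mathsf{P}$ maps instances $I \mapsto \hat I \leq_T I$ and solutions $\hat Y \mapsto Y \leq_T \hat Y$, and similarly for a reduction of $\mathsf{P}$ to $\mathsf{R}$, then the composite instance map and the composite solution map are again computable in the required arguments.) So no new combinatorics is needed; I only need to line up the existing building blocks in the correct direction.

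First I would record two ``free'' reductions given by the identity maps on instances and solutions. Since $\FS^{=2}(H)\subseteq\FS^{\leq 2}(H)$, any infinite apart set $H$ witnessing $\HT^{\leq 2}_2$ with apartness for a colouring $f$ automatically has $\FS^{=2}(H)$ monochromatic, hence witnesses $\HT^{=2}_2$ with apartness for the \emph{same} $f$. Taking both the instance map and the solution map to be the identity therefore yields
\[\HT^{=2}_2 \text{ with apartness} \;\leq_{\mathrm{sc}}\; \HT^{\leq 2}_2 \text{ with apartness}.\]
(The analogous inclusion $\FU^{=2}\subseteq\FU^{\leq 2}$ would give the same on the finite-unions side, should I choose to route through $\FUT^{=2}_2$ instead.)

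With this in hand, the two claimed reductions are obtained by assembling chains. For the first, I would combine Theorem~\ref{thm:hteq22_to_ipt22} (with $t=2$), the identity reduction just noted, and Proposition~\ref{prop:ht_eq_fut} (in its $\HT^{\leq 2}_2$-with-apartness-to-$\FUT^{\leq 2}_2$ direction):
\[\IPT^2_2 \;\leq_{\mathrm{sc}}\; \HT^{=2}_2 \text{ with apartness} \;\leq_{\mathrm{sc}}\; \HT^{\leq 2}_2 \text{ with apartness} \;\leq_{\mathrm{sc}}\; \FUT^{\leq 2}_2.\]
For the second I would reuse the first two links and append Lemma~\ref{lem:leqn_to_apart} (with $n=2$, $k=2$), which says precisely that $\HT^{\leq 2}_2$ with apartness is strongly computably reducible to $\HT^{\leq 2}_4$:
\[\IPT^2_2 \;\leq_{\mathrm{sc}}\; \HT^{=2}_2 \text{ with apartness} \;\leq_{\mathrm{sc}}\; \HT^{\leq 2}_2 \text{ with apartness} \;\leq_{\mathrm{sc}}\; \HT^{\leq 2}_4.\]

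The argument is almost entirely bookkeeping, so the only real point requiring care—what I would flag as the main (mild) obstacle—is getting the \emph{direction} of the identity reduction right: it is crucial that one reduces the ``$=2$'' principle to the ``$\leq 2$'' principle, since a solution of the latter always restricts to a solution of the former, whereas the converse identity reduction fails. Beyond that, I would briefly confirm that each constituent solution map uses only the harder problem's solution (not the original instance), so that the composite remains a \emph{strong} computable reduction rather than merely a computable one; this holds because Theorem~\ref{thm:hteq22_to_ipt22}, Proposition~\ref{prop:ht_eq_fut} and Lemma~\ref{lem:leqn_to_apart} are each already stated as strong reductions and the identity step trivially is one.
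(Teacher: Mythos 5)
Your proof is correct and takes essentially the same route as the paper: the paper's own proof likewise notes that $\leq_{\mathrm{sc}}$ is transitive and composes Theorem~\ref{thm:hteq22_to_ipt22} with Proposition~\ref{prop:ht_eq_fut} (for $\FUT^{\leq 2}_2$) and with Lemma~\ref{lem:leqn_to_apart} (for $\HT^{\leq 2}_4$). The only difference is that you spell out the trivial identity reduction from the ``$=2$'' principle to the ``$\leq 2$'' principle with apartness, which the paper leaves implicit.
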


\begin{proof}
Note that the relation $\leq_{\mathrm{sc}}$ is transitive.
That $\IPT^2_2\leq_{\mathrm{sc}}\FUT^{\leq 2}_2$ follows from Theorem \ref{thm:hteq22_to_ipt22} and 
Proposition \ref{prop:ht_eq_fut}. The fact that $\IPT^2_2\leq_{\mathrm{sc}}\HT^{\leq 2}_4$ follows
from Theorem \ref{thm:hteq22_to_ipt22} and Lemma~\ref{lem:leqn_to_apart}.
\end{proof}

A proof of $\IPT^2_2\leq_{\mathrm{sc}}\HT^{\leq 2}_5$ was originally given by the first author (see~\cite{Car:16:loblo})
using a different argument.

\section{Other restrictions of Hindman's Theorem}\label{sec:misc}

In this section we present results on some restrictions of Hindman's Theorem of a different flavour. 
These restrictions are not obtained by merely bounding the number of terms of the sums for which
monochromaticity is guaranteed. Instead, it is required that all sums whose length belongs to some
structured set of integers have the same colour. Nevertheless, 
some bounds on their strength can be obtained by adapting 
the previous arguments.

\subsection{Weak Yet Strong Principles}

The first author investigated in \cite{Car:16:wys}
a family of restrictions of $\HT$ that admit proofs from Ramsey's Theorem 
yet realize the Blass-Hirst-Simpson lower bound, i.e., they are equivalent to $\ACA_0$. 
Our results from the previous sections (Theorem \ref{thm:hteq3_to_aca0}
and Proposition \ref{prop:exactsums}) 
show that the principles $\HT^{=n}_k$ with apartness are a ``weak yet strong''
family in this sense. One might read this ``weak yet strong'' phenomenon as a warning not to 
over-interpret the lower bounds for $\HT^{\leq 2}$ obtained in the previous sections. 
The simplest instance of the ``weak yet strong'' phenomenon treated in~\cite{Car:16:wys} 
is the following Hindman-Brauer Theorem (with $2$-apartness): 
\begin{quote}
Whenever $\Nat$ is 2-coloured there is an infinite 
and $2$-apart set $H\subseteq\Nat$ and {\em there exist} positive integers $a,b$ such that 
$\FS^{\{a,b,a+b,a+2b\}}(H)$ is monochromatic. 
\end{quote}
We complement the results from \cite{Car:16:wys} by showing that some apparently weaker restrictions 
of Hindman's Theorem share the same properties of the Hindman-Brauer's Theorem. 

\begin{definition}
$\HT^{\exists\{a < b\}}_2$ is the following principle: 
Whenever $\Nat$ is $2$-coloured there exists an infinite set $H\subseteq\Nat$ and positive integers 
$a < b$ such that $\FS^{\{a, b\}}(H)$ is monochromatic. 
\end{definition}

\begin{theorem}\label{thm:htab_eq_aca0}
$\HT^{\exists\{a<b\}}_2$ with apartness is equivalent to $\ACA_0$ over $\RCA_0$.
\end{theorem}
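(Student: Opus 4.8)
The plan is to prove both directions, reusing the important-digit colouring $g$ from Proposition~\ref{prop:apht22_to_aca0}. For the lower bound I would assume $\HT^{\exists\{a<b\}}_2$ with apartness and show that the range of an arbitrary injection $f\colon\Nat\to\Nat$ exists, which is equivalent to $\ACA_0$. Applying the principle to $g$ yields an infinite apart set $H$, integers $a<b$, and a colour $r$ with $\FS^{\{a,b\}}(H)$ monochromatic of colour $r$. The crux is a single uniform claim: for all elements $n_1<\dots<n_a$ of $H$, $\rg(f)\cap[0,\lambda(n_1)) = \rg(f\restriction\mu(n_a))\cap[0,\lambda(n_1))$. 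Granting it, the range is decidable relative to $H$: to decide $x$, search $H$ for $n_1$ with $\lambda(n_1)>x$, take the next $a-1$ elements $n_2<\dots<n_a$ of $H$, and test whether $x\in\rg(f\restriction\mu(n_a))$.

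To establish the claim I would argue by contradiction, in the style of Theorem~\ref{thm:hteq3_to_aca0}. If some $x<\lambda(n_1)$ were in $\rg(f)\setminus\rg(f\restriction\mu(n_a))$, fix (by strong $\Sigma^0_1$-collection) an $\ell$ capturing all range elements below $\lambda(n_1)$; then $f$ attains $x$ somewhere in $[\mu(n_a),\ell)$. Pick $m_1<\dots<m_{b-a}$ in $H$ above $n_a$ with $\lambda(m_1)>\ell$ --- these exist because $H$ is infinite and apart --- and compare the length-$a$ sum $\sigma=n_1+\dots+n_a$ with the length-$b$ sum $\sigma'=\sigma+m_1+\dots+m_{b-a}$. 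By apartness the base-$2$ exponents of $\sigma'$ are those of $\sigma$ followed by the disjoint blocks of the $m_i$, so every important digit of $\sigma$ survives unchanged in $\sigma'$; the digit at $\lambda(m_1)$ becomes important, since its interval $[\mu(n_a),\lambda(m_1))$ contains the witness for $x$; and injectivity of $f$ together with $\lambda(m_1)>\ell$ forces every digit inside the $m_i$-blocks to be unimportant. Thus $g(\sigma')\neq g(\sigma)$, contradicting that both have colour $r$. This is the heart of the proof, and it is exactly where two distinct lengths are indispensable: the extra summands let one promote a length-$a$ sum to a length-$b$ sum and thereby expose a hidden range element, whereas with a single length (as for $\HT^{=2}_2$) the decisive extra digit cannot be created.

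For the upper bound I would deduce $\HT^{\exists\{a<b\}}_2$ with apartness from $\RT^3_2$. Given $f\colon\Nat\to 2$ and any infinite $2$-apart set $A_0$, I would thin out successively by applying $\RT^j_2$ to the colouring $(x_1,\dots,x_j)\mapsto f(x_1+\dots+x_j)$ for $j=1,2,3$, producing nested infinite apart sets $A_0\supseteq H_1\supseteq H_2\supseteq H_3$ with $\FS^{=j}(H_j)$ monochromatic of some colour $c_j$. Since apartness and the monochromaticity of shorter sums are inherited by subsets, $\FS^{=j}(H_3)$ is monochromatic of colour $c_j$ for every $j\in\{1,2,3\}$ at once. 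As $c_1,c_2,c_3\in\{0,1\}$, two of them agree, say $c_a=c_b$ with $a<b$; then $\FS^{\{a,b\}}(H_3)$ is monochromatic and $H_3$ is apart, as required. Three lengths are needed precisely so that the pigeonhole principle forces a repeated colour.

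The main obstacle is verifying in the claim that passing from $\sigma$ to $\sigma'$ changes the number of important digits by exactly one; this rests on apartness (the exponent blocks do not interleave), on the defining property of $\ell$, and on the injectivity of $f$ (no value below $\lambda(n_1)$ recurs beyond $\ell$). Everything else, including the upper bound, is routine.
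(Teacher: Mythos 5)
Your proof is correct and takes essentially the same approach as the paper: your lower bound is exactly the paper's intended "minor adaptation" of Proposition~\ref{prop:apht22_to_aca0} (an $a$-term sum of elements of $H$ playing the role of $n$ and a $(b-a)$-term sum with $\lambda$ above $\ell$ playing the role of $m$), spelled out in full detail, and your upper bound is the same Ramsey-plus-pigeonhole argument. The only cosmetic difference is that the paper obtains monochromaticity of sums of lengths $1,2,3$ in a single application of $\RT^3_8$ to the product colouring $c(x_1,x_2,x_3)=\langle f(x_1),f(x_1+x_2),f(x_1+x_2+x_3)\rangle$ (which also yields the strong computable reduction to $\RT^3_8$ recorded in the paper's table), whereas you iterate $\RT^j_2$ for $j=1,2,3$ and pass to nested subsets.
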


\begin{proof}
We first prove the upper bound. 
Given $f:\Nat\to 2$ let $c:[\Nat]^3\to 8$ be defined as follows: 
\[ c(x_1,x_2,x_3) := \langle f(x_1),f(x_1+x_2),f(x_1+x_2+x_3)\rangle.\]
Fix an infinite and apart set $H_0\subseteq\Nat$. 
By $\RT^3_{8}$ applied to colourings of triples from $H_0$ 
we get an infinite (and 2-apart) set $H\subseteq H_0$ monochromatic for $c$. 
Let the colour of $[H]^3$ be $(c_1,c_2,c_3)$, a binary sequence of length $3$. 
Then, for each $i\in \{1,2,3\}$, $f$ restricted to $\FS^{=i}(H)$ is a constant function
with value $c_i$. Obviously for some $3 \geq b > a > 0$ it must be that $c_a = c_b$.
Then $\FS^{ \{a,b\}}(H)$ is monochromatic under $f$.

The lower bound is proved by a minor adaptation of the proof of Proposition~\ref{prop:apht22_to_aca0}. 
As the $n$ in that proof take an $a$-term sum. Then take a $(b-a)$-term
sum as the $m$.
\end{proof}

Note that the upper bound part of the previous theorem establishes that
$\HT^{\exists\{a<b\}}_2$ with apartness is strongly computably reducible to $\RT^3_8$.
The same proof yields that the following Hindman-Schur Theorem
with apartness from \cite{Car:16:wys} implies $\RT^3_2$:
\begin{quote}
Whenever $\Nat$ is 2-coloured there is an infinite and apart
set $H\subseteq\Nat$ and {\em there exist} positive integers $a,b$ such 
that $\FS^{\{a,b,a+b\}}(H)$ is monochromatic. 
\end{quote}
Indeed, the latter principle implies $\HT^{\exists\{a<b\}}$ with apartness. Provability from $\RT^3_2$
is shown in \cite{Car:16:wys} by an argument similar to the upper bound part of Theorem~\ref{thm:htab_eq_aca0}.
The proof shows indeed that the Hindman-Schur Theorem with apartness is strongly computably reducible
to $\RT^6_{2^6}$. The number $6$ comes from the Ramsey number for ensuring a monochromatic triangle
and from the standard proof of Schur's Theorem from the finite Ramsey Theorem (see, e.g., \cite{GRS}).

Let us observe that the proof of Theorem \ref{thm:hteq3_to_aca0}
works in the case of $\HT^{=a}_2$ with
apartness, for any fixed $a\geq 3$ by taking a sum 
of $a-2$ elements in place of $n$.
This leads us to the following definition and corollary.

\begin{definition}
Let $\HT^{\exists\{a\geq 3\}}_2$ be the following principle:
For every colouring $f:\Nat\to 2$ there exists an infinite set $H\subseteq\Nat$
and there exists a number $a\geq 3$ such that $\FS^{\{a\}}(H)$ is monochromatic for $f$.
\end{definition}

\begin{theorem}\label{thm:hta_to_aca0}
$\HT^{\exists \{a\geq 3\}}_2$ with apartness is equivalent to $\ACA_0$, over $\RCA_0$. 
\end{theorem}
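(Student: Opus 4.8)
The statement is an equivalence whose two directions are of very unequal difficulty. The implication from $\ACA_0$ is immediate: since $\ACA_0$ is equivalent to $\RT^3_2$, the upper bound in Theorem~\ref{thm:hteq3_to_aca0} produces, from any $f\colon\Nat\to 2$, an infinite apart set $H$ with $\FS^{=3}(H)$ monochromatic, and taking the existential witness $a=3$ this same $H$ witnesses $\HT^{\exists\{a\geq 3\}}_2$ with apartness. Hence the real content lies in the lower bound, that $\HT^{\exists\{a\geq 3\}}_2$ with apartness implies $\ACA_0$, which I would obtain by adapting the proof of Theorem~\ref{thm:hteq3_to_aca0}.

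For the lower bound I would begin exactly as there: given an injective $f\colon\Nat\to\Nat$ whose range is to be decided, I keep the same colouring $g\colon\Nat\to 2$ sending $n$ to the parity of the number of digits important in $n$. Applying $\HT^{\exists\{a\geq 3\}}_2$ with apartness to $g$ now supplies an infinite apart set $H$ together with a specific exponent $a\geq 3$ such that $\FS^{=a}(H)$ is monochromatic, say of colour $r$. The decisive move, foreshadowed by the remark preceding the definition, is to let a block $N:=h_{i_1}+\dots+h_{i_{a-2}}$ consisting of $a-2$ elements of $H$ play the role previously played by the single element $n$; it is precisely the hypothesis $a\geq 3$ that makes this block a nonempty, well-defined sum. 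Because apartness forces base-$2$ representations to concatenate without carries, $N$ behaves as a single apart ``superelement'' with $\lambda(N)=\lambda(h_{i_1})$ and $\mu(N)=\mu(h_{i_{a-2}})$, and for $k$ above $N$ and $m$ above $k$ the sum $N+k+m$ is a sum of exactly $a$ distinct elements of $H$, so it lies in $\FS^{=a}(H)$ and has $g$-colour $r$.

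With this substitution the analogues of Claim~\ref{lem:deciding} and Claim~\ref{lem:searching} carry over. The searching analogue holds because, choosing $k$ with $\lambda(k)$ above the bound $\ell$ that (by strong $\Sigma^0_1$-collection) captures $\rg(f)$ below $\lambda(N)$, any larger $m$ contributes only digits above $\ell$ and hence no new important digit, giving $g(N+k)=g(N+k+m)=r$. The deciding analogue holds because, if some $x<\lambda(N)$ lay in $\rg(f)\setminus\rg(f\restriction\mu(k))$, its unique witness $z$ would satisfy $\mu(k)\le z<\ell$, so adding a large $m$ would make exactly the boundary digit between $k$ and $m$ newly important, flipping the parity to $g(N+k+m)=1-r$ and contradicting monochromaticity. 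Together these yield an $(H,a,f)$-computable decision procedure for $\rg(f)$: for a given $x$ one forms $N$ from $a-2$ elements of $H$ with $\lambda(N)>x$, searches for $k\in H$ with $g(N+k)=r$ (which exists by the searching analogue), and answers yes iff $x\in\rg(f\restriction\mu(k))$. Hence the range of $f$ exists and $\ACA_0$ follows.

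The point I would stress, and the only genuinely delicate one, is that the whole toggling mechanism lives at the very top of the representation --- at the single boundary between $k$ and the freshly added $m$ --- and is therefore insensitive to both the internal structure and the length of the block $N$. This is what allows the argument to tolerate an $a$ that is merely supplied existentially (and, in a nonstandard model, possibly nonstandard): we never need to know $a$ in advance or to induct on it, only to have $a\geq 3$ so that $N$ carries at least one term. I expect the main, if routine, step to be exactly this bookkeeping --- checking that the important digits of $N+k$ and of $N+k+m$ coincide except possibly at the boundary between $k$ and $m$.
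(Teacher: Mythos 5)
Your proposal is correct and takes essentially the same route as the paper: the paper's proof of this theorem is precisely the observation that the argument of Theorem~\ref{thm:hteq3_to_aca0} goes through for $\HT^{=a}_2$ with apartness for any $a\geq 3$ once the single element $n$ is replaced by a sum of $a-2$ elements of $H$, which is exactly your block $N$, with the boundary-digit parity toggle between $k$ and $m$ doing the work unchanged. Your treatment of the two claims, of the existentially supplied (possibly nonstandard) $a$, and of the routine upper bound via $\RT^3_2$ with witness $a=3$ matches the intended proof.
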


Note that the latter result, coupled with the results of the previous section, shows that the 
principles $\HT^{=n}_k$ with apartness form a weak yet strong family in the sense of 
\cite{Car:16:wys}. 

\subsection{Increasing Polarized Hindman's Theorem}

We define an (increasing) polarized version of Hindman's Theorem. We prove that
the case of pairs and 2 colours with an appropriately defined notion of 
apartness is equivalent to $\IPT^2_2$. One of the directions is witnessed by a strong computable 
reduction.

\begin{definition}[(Increasing) Polarized Hindman's Theorem]
Fix $n\geq 1$. $\PHT_2^n$ (resp.~$\IPHT_2^n$) 
is the following principle: For every $2$-colouring $f$ of the positive integers 
there exists a sequence $(H_1,\dots,H_n)$ of infinite sets
such that for some colour $k < 2$, for all (resp.~increasing) $(x_1,\dots,x_n) \in H_1\times \dots \times H_n$, 
$f(x_1+\dots + x_n)=k$. 
\end{definition}

We impose an apartness condition on a solution $(H_1,\dots,H_n)$ of $\IPHT^n_2$
by requiring that the union $H_1\cup\dots\cup H_n$ is apart. We denote
by ``$\IPHT^n_2$ with apartness'' the principle $\IPHT^n_2$ with this apartness 
condition on the solution set. 

\begin{theorem}\label{thm:ipht22_eq_ipt22}
$\IPT^2_2$ and $\IPHT^2_2$ with apartness are equivalent over $\RCA_0$. Furthermore, 
$\IPT^2_2 \leq_{\mathrm{sc}} \IPHT^2_2$.
\end{theorem}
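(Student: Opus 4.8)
The plan is to establish the equivalence by two implications, using the two dual encodings already present in this section: powers of two for the direction $\IPT^2_2 \to \IPHT^2_2$ with apartness, and the $(\lambda,\mu)$-decoding of Theorem~\ref{thm:hteq22_to_ipt22} for the reverse direction. The strong computable reduction claimed in the \emph{furthermore} will be read off from the second argument.

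For the implication $\IPT^2_2 \to \IPHT^2_2$ with apartness I would start from an instance $f\colon\Nat\to 2$ of $\IPHT^2_2$ and define a colouring $c\colon[\Nat_0]^2\to 2$ by $c(u,v):=f(2^u+2^v)$ for $u<v$. Applying $\IPT^2_2$ to $c$ yields an increasing p-homogeneous pair $(I_1,I_2)$ of some colour $k$. I would then set $H_1:=\{2^u:u\in I_1\}$ and $H_2:=\{2^v:v\in I_2\}$. Each $H_i$ is infinite, and since distinct powers of two have disjoint (singleton) sets of exponents, any two elements of $H_1\cup H_2$ are automatically $2$-apart, so the apartness condition holds for free. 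For an increasing pair $(2^u,2^v)\in H_1\times H_2$ we have $2^u<2^v$, hence $u<v$ with $u\in I_1$, $v\in I_2$; thus $f(2^u+2^v)=c(u,v)=k$, which is exactly the homogeneity required for $f$.

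For the reverse direction I would reuse the construction of Theorem~\ref{thm:hteq22_to_ipt22}. Given $c\colon[\Nat_0]^2\to 2$, define $f\colon\Nat\to 2$ by $f(n):=c(\lambda(n),\mu(n))$ when $n$ is not a power of two and $f(n):=0$ otherwise; this is computable from $c$. Feeding $f$ to $\IPHT^2_2$ with apartness returns an apart pair $(G_1,G_2)$ of colour $k$. After passing computably to disjoint infinite subsets $G_1'\subseteq G_1$ and $G_2'\subseteq G_2$ (by interleaving, so as to exclude the diagonal), I would output $H_1:=\{\lambda(y):y\in G_1'\}$ and $H_2:=\{\mu(y):y\in G_2'\}$, which are infinite because $\lambda$ and $\mu$ are strictly increasing on an apart set. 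To verify homogeneity, take an increasing pair $(x_1,x_2)\in H_1\times H_2$, say $x_1=\lambda(a)$, $x_2=\mu(b)$ with $a\in G_1'$, $b\in G_2'$. Apartness rules out $a>b$ (which would force $\mu(b)<\lambda(a)$, i.e.\ $x_2<x_1$), and $a=b$ is excluded by disjointness, so $a<b$; then apartness gives $\lambda(a+b)=\lambda(a)=x_1$ and $\mu(a+b)=\mu(b)=x_2$, whence $c(x_1,x_2)=f(a+b)=k$. Since all these transformations are computable from $(G_1,G_2)$ alone, this gives a strong computable reduction $\IPT^2_2\leq_{\mathrm{sc}}\IPHT^2_2$ with apartness; together with the first implication it yields the claimed equivalence over $\RCA_0$.

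The main obstacle is the sharper \emph{furthermore}, the reduction $\IPT^2_2\leq_{\mathrm{sc}}\IPHT^2_2$ to the version \emph{without} apartness. Here the pair $(G_1,G_2)$ returned by the oracle need not be apart, and then the sum $a+b$ need not decode as $(\lambda(a),\mu(b))$ because of carries; indeed an infinite set (e.g.\ of odd numbers) may admit no infinite apart subset at all, so one cannot simply thin $(G_1,G_2)$. The approach I would take is to secure an apart subsequence whenever possible --- this succeeds precisely when $\lambda$ is unbounded on each $G_i$, in which case the greedy interleaving above applies --- and to observe that for the designed instance $f$ any homogeneous solution on which this fails forces $f$ to be constant on a cofinal set of values, so that $c$ is trivial on a whole region and a default pair of sets already solves $c$. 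Checking that this dichotomy can be carried out computably from $(G_1,G_2)$ alone, without reference to $c$, is the delicate point on which the strengthening to the non-apartness version rests.
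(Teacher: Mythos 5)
Your proof of the theorem's actual content is correct, and the two directions split as follows when compared with the paper. Your decoding direction (second paragraph) is essentially the paper's proof verbatim: the same instance $f(n):=c(\lambda(n),\mu(n))$ (with $f$ zero on powers of two), the same interleaving device to exclude the diagonal $a=b$, and the same read-off $H_1=\{\lambda(a)\}$, $H_2=\{\mu(b)\}$ with the carry-free computation $\lambda(a+b)=\lambda(a)$, $\mu(a+b)=\mu(b)$ on an apart solution. Your encoding direction is genuinely different, and in fact better than what the paper does: the paper uses the ``obvious'' colouring $c(x,y):=f(x+y)$ and must then invoke Lemma 4.3 of Dzhafarov--Hirst to relativize $\IPT^2_2$ to a pre-chosen apart pair $S_1,S_2$, explicitly remarking that it is unclear whether this direction can be witnessed by a strong computable reduction. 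Your colouring $c(u,v):=f(2^u+2^v)$ bakes the apart encoding into the instance itself, so no relativization is needed, apartness of $H_1\cup H_2$ is automatic (any set of powers of two is $2$-apart), and both maps $f\mapsto c$ and $(I_1,I_2)\mapsto(H_1,H_2)$ are computable --- i.e., your argument actually shows $\IPHT^2_2$ with apartness $\leq_{\mathrm{sc}}\IPT^2_2$, strengthening the paper's left-to-right direction.

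Your third paragraph, however, chases a claim the paper does not make. As the paper's own proof states (``indeed, that $\IPT^2_2 \leq_{\mathrm{sc}} \IPHT^2_2$ \emph{with apartness}'') and as Table 1 confirms, the ``Furthermore'' in the statement is shorthand for the reduction to the with-apartness principle --- exactly what your second paragraph already establishes --- not to apartness-free $\IPHT^2_2$. So the ``delicate point'' is moot for this theorem; and it is just as well, because your sketched dichotomy would not go through: constancy of $f$ on sums with carries only makes $c$ constant on the set of realized pairs $(\lambda(a+b),\mu(a+b))$, which has no product structure and yields no ``default pair'' solving $c$; moreover, whether $\lambda$ is unbounded on each $G_i$ is a $\Pi^0_2$ property of the solution, not something decidable from $(G_1,G_2)$, so the case split cannot be carried out computably on the solution side as strong computable reducibility requires. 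Treat that paragraph as an open aside; the proof of the theorem as intended is complete without it.
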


\begin{proof}
We first prove that $\IPT^2_2$ implies $\IPHT^2_2$ with apartness. 
Given $f:\Nat\to 2$ define $c:~[\Nat]^2\to 2$ in the obvious way setting $c(x,y) := f(x+y)$. 
Fix two infinite disjoint sets $S_1,S_2 \subseteq\Nat$ such that $S_1\cup S_2$ is apart. By Lemma 4.3
of \cite{Dza-Hir:11}, $\IPT^2_2$ implies over $\RCA_0$ its own relativization: there exists
an increasing p-homogeneous sequence $(H_1,H_2)$ for $c$ such that $H_i\subseteq S_i$.
Note that it is unclear whether this implication can be witnessed by a strong computable
reduction. The set $H_1\cup H_2$ is 2-apart by construction. Let the
colour be $k<2$. Obviously we have that for any increasing
pair $(x_1,x_2) \in H_1\times H_2$, $f(x_1+x_2) = c(x_1,x_2) = k$. 
Therefore $(H_1,H_2)$ is a solution to $\IPHT^2_2$ with apartness for $f$.

Next we prove that $\IPHT^2_2$ with apartness implies $\IPT^2_2$ and, indeed, that
$\IPT^2_2 \leq_{\mathrm{sc}} \IPHT^2_2$ with apartness.
Let $c:[\Nat_0]^2\to 2$ be given. Define $f:\Nat\to 2$ by setting $f(n) := c(\lambda(n),\mu(n))$
if $n$ is not a power of $2$ and $f(n) = 0$ otherwise. 
Let $(H_1,H_2)$ be an apart solution to $\IPHT^2_2$ for $f$, 
of colour $k<2$. Let 
$H = \{ h_1 <  h_2 < h_3 < \dots \}$ be such that 
$h_{2i-1} \in H_1$ and $h_{2i} \in H_2$ for each $i \in \Nat$.
Then set $H_1^+ := \{ \lambda(h_{2i-1}) : i \in \Nat\}$ and 
$H_2^+ := \{\mu(h_{2i}) : i \in \Nat\}$.  
We claim that $(H^+_1,H^+_2)$ is an increasing p-homogeneous pair for $c$. Let $(x_1,x_2)\in H_1^+ \times H_2^+$ be an increasing pair. 
Then for some $h\in H_1$ and $h'\in H_2$ such that $h<h'$ we have $\lambda(h)=x_1$ and $\mu(h')=x_2$.
Therefore 
\[c(x_1,x_2) = c(\lambda(h),\mu(h')) = c(\lambda(h+h'),\mu(h+h')) = f(h+h') = k,\]
regardless of the choice of $(x_1,x_2)$.
\end{proof}


\subsection{Exactly Large Sums, with apartness}

By analogy with the Pudl\'ak-R\"odl \cite{Pud-Rod:82} theorem $\RT^{!\omega}_2$ on colourings of exactly large sets 
we consider a restriction of Hindman's Theorem to exactly large sums, i.e., sums
whose set of terms is an exactly large set. 
As noted earlier, the Pudl\'ak-R\"odl theorem is known to imply $\HT$ over $\RCA_0$ 
(yet no combinatorial proof is known).  

Let us introduce some terminology and notation and state the Pudl\'ak-R\"odl theorem. 
A finite set $S\subseteq\Nat$ is {\em exactly large}, or $!\omega$-large, if $|S|=\min(S)+1$. 
Exactly large sets are strictly related to Schreier sets in Banach Space Theory (see \cite{Far-Neg:08}), 
while their supersets -- called {\em relatively large sets} -- 
play a prominent role in the study of 
unprovability results for first-order theories of arithmetic (see \cite{Par-Har:77,Ket-Sol:81}). 

\begin{definition}[Ramsey's Theorem for exactly large sets]\label{defi:rtomega}
$\RT^{!\omega}_2$ is the following principle:
\begin{quote}
Whenever the exactly large subsets of an infinite set $X$ of natural numbers are coloured in $2$ colours, 
there exists an infinite set $H\subseteq X$ such that all exactly large subsets of $H$ have the same colour. 
\end{quote}
\end{definition}
The strength of $\RT^{!\omega}_2$ was studied by the first and fourth author in \cite{Car-Zda:14} 
and proved there to be much beyond the strength of Ramsey's Theorem. 

We now formulate our analogue for Hindman's Theorem. 
Given a set $X$ of natural numbers, the 
sums of integers whose underlying set of terms is an exactly large set in $X$ are called {\em exactly large sums} (from $X$).
We denote by $\FS^{!\omega}(X)$ the set of numbers that can be expressed as sums of an exactly large
subset of $X$.

\begin{definition}[Hindman's Theorem for Exactly Large Sums]
$\HT^{!\omega}_2$ denotes the following principle: For every colouring $f:\Nat\to 2$
there exists an infinite set $H\subseteq \Nat$ such that $\FS^{!\omega}(H)$ is monochromatic
under $f$.
\end{definition}
Besides being a restriction of $\HT$, 
$\HT^{!\omega}_2$ (with $t$-apartness, for any $t>1$) has an easy direct proof from $\RT^{!\omega}_2$.
Given $f:\Nat\to 2$ just set $c(S):=f(\sum S)$, for $S$ an exactly large set (to get $t$-apartness, 
restrict $c$ to an infinite $t$-apart set). Consistently with the previous conventions, we use $\HT^{!\omega}_2$ with $2$-apartness 
to denote the principle obtained from $\HT^{!\omega}_2$ by imposing that the solution is a $2$-apart set. 
We note, however, that for the principle $\HT^{!\omega}_2$ the choice of $t$ 
in the $t$-apartness conditon might matter. 

The argument of Theorem~\ref{thm:hteq3_to_aca0} can be easily adapted to show that $\HT^{!\omega}_2$ with 
$2$-apartness implies $\ACA_0$. In the proof of Theorem~\ref{thm:hteq3_to_aca0} take, 
instead of $n$, an almost exactly large
sum $n_0+n_1+\dots+n_{n_0-2}$ of elements of $H$. The argument then proceeds unchanged.

\begin{proposition}\label{prop:htomega_to_aca0}
$\HT^{!\omega}_2$ with apartness implies $\ACA_0$ over $\RCA_0$.
\end{proposition}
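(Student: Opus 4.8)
The plan is to adapt the proof of Theorem~\ref{thm:hteq3_to_aca0} essentially verbatim, replacing the fixed sum-length $3$ by the variable length dictated by the exactly-large condition. Given an injective $f\colon\Nat\to\Nat$, I would use the very same colouring $g\colon\Nat\to 2$ built from the notion ``$j$ is important in $n$'', namely $g(n):=\card\{j: j\textrm{ is important in }n\}\bmod 2$, which is computable relative to $f$. Applying $\HT^{!\omega}_2$ with apartness to $g$ yields an infinite apart set $H$ such that $\FS^{!\omega}(H)$ is monochromatic, say of colour $r<2$. The goal, as before, is to extract from $H$ an algorithm deciding membership in $\rg(f)$, thereby proving that the range of the injection exists, which is equivalent to $\ACA_0$.

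The technical adjustment, flagged in the paragraph preceding the statement, is that the role of the single element $n$ in the proof of Theorem~\ref{thm:hteq3_to_aca0} is now played by an \emph{almost exactly large sum}. Concretely, given an input $x$ whose membership in $\rg(f)$ we wish to decide, I would first pick $h_0\in H$ with $x<\lambda(h_0)$, and then select further elements $h_1,\dots,h_{h_0-2}\in H$ so that $N:=h_0+h_1+\dots+h_{h_0-2}$ is a sum of exactly $h_0-1$ terms from $H$; since $\lambda(N)=\lambda(h_0)=:m_0$ by apartness and $m_0\ge h_0$ (the least exponent is at least as large as the number of elements when $H$ is apart and $h_0\ge 2$), the set of terms of $N$ is almost exactly large, meaning that adjoining \emph{one} more element of $H$ produces an exactly large sum lying in $\FS^{!\omega}(H)$. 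This is the exact analogue of the two-step structure in Theorem~\ref{thm:hteq3_to_aca0}, where $n+k$ was almost-large and $n+k+m$ completed it.

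With $N$ fixed, the two claims transfer directly. For the deciding step, I would show: if $k\in H$ is large enough that $N+k\in\FS^{!\omega}(H)$ and $g(N+k)=r$, then for every $x<\lambda(N)$ we have $x\in\rg(f)\iff x\in\rg(f\restriction\mu(k))$. The argument is identical to Claim~\ref{lem:deciding}: assuming some $x<\lambda(N)$ lies in $\rg(f)\setminus\rg(f\restriction\mu(k))$, one finds by $\Sigma^0_1$-collection an $\ell$ capturing all range-membership below $\lambda(N)$, takes $m\in H$ with $\lambda(m)>\ell$ and $N+k+m\in\FS^{!\omega}(H)$, and observes that the number of important digits increases by exactly one in passing from $N+k$ to $N+k+m$, forcing $g(N+k+m)=1-r$ and contradicting monochromaticity. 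The searching step, analogous to Claim~\ref{lem:searching}, guarantees that a suitable $k$ with $g(N+k)=r$ exists: choosing $k\in H$ with $\lambda(k)>\ell$ (and large enough to complete $N$ to an exactly large sum) ensures $g(N+k)=g(N+k+m)=r$ for any further $m\in H$. Combining the two claims gives the decision procedure.

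The main obstacle I anticipate is bookkeeping the cardinality of the term-set so that the exactly-large condition is respected at each stage, rather than any genuine combinatorial difficulty. One must check that $N$ really is realizable as a sum of $h_0-1$ distinct elements of $H$ (using that $H$ is infinite), that $N+k$ and $N+k+m$ genuinely have $\min$-of-terms equal to their cardinality minus one so as to be \emph{exactly} large and hence lie in $\FS^{!\omega}(H)$, and that the apartness of $H$ keeps the least exponent $\lambda(N)$ equal to $\lambda(h_0)$ throughout so the ``important digit'' counting behaves exactly as in the fixed-length case. Once these cardinality constraints are tracked, the ``important digits change by one'' computation is insensitive to the length of the sum, so the rest of the argument of Theorem~\ref{thm:hteq3_to_aca0} proceeds unchanged.
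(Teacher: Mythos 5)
Your high-level plan is exactly the paper's proof: the paper's own argument consists precisely of the recipe you describe (replace the single element $n$ of Theorem~\ref{thm:hteq3_to_aca0} by an almost exactly large sum $N=n_0+n_1+\dots+n_{n_0-2}$ of elements of $H$, then run the two claims verbatim). However, your execution of the ``cardinality bookkeeping'' --- the very point you flag as the main thing to check --- contains a genuine inconsistency that breaks the written argument. You require \emph{both} $N+k\in\FS^{!\omega}(H)$ (in the hypothesis of your deciding step, and again in your final verification list) \emph{and} $N+k+m\in\FS^{!\omega}(H)$ (which is where the contradiction with monochromaticity is supposed to come from). These two conditions cannot hold simultaneously: the term sets of $N+k$ and of $N+k+m$ have the same minimum element $h_0$, so exact largeness would force both to have exactly $h_0+1$ elements, yet one properly contains the other. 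The correct bookkeeping is: $N$ has $h_0-1$ terms (two short of exactly large), $N+k$ has $h_0$ terms and is \emph{not} in $\FS^{!\omega}(H)$, and only $N+k+m$, with $h_0+1$ terms, is exactly large. Consequently, monochromaticity may be invoked only for $N+k+m$; the equality $g(N+k)=r$ is not available by fiat but must be obtained from the searching claim, exactly as in Claim~\ref{lem:searching}, where $g(n+k)=r$ is deduced from $g(n+k+m)=r$ because no new important digit appears. If instead you arranged $N$ so that $N+k$ is exactly large, the deciding step would collapse: $N+k+m$ would then lie outside $\FS^{!\omega}(H)$, and $g(N+k+m)=1-r$ would contradict nothing.

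A second, related error: your justification that the term set of $N$ is almost exactly large rests on the claim $\lambda(h_0)\ge h_0$, which is false for every $h_0\ge 2$, since $\lambda(h_0)\le\log_2 h_0<h_0$. No such claim is needed: exact largeness of a term set $S\subseteq H$ means $|S|=\min(S)+1$, where $\min(S)$ is the minimum \emph{element} (here $h_0$ itself); binary exponents play no role in this condition, and apartness is needed only to guarantee $\lambda(N)=\lambda(h_0)$ and that the important-digit count behaves additively. Once you repair the structure --- $N$ two terms short, $N+k$ one term short, $N+k+m$ exactly large, with the searching claim supplying a $k$ such that $g(N+k)=r$ --- the remainder of your argument (strong $\Sigma^0_1$-collection to obtain $\ell$, the choice of $m\in H$ with $\lambda(m)>\ell$, and the parity change of the number of important digits) is correct and coincides with the proof intended in the paper.
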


Furthermore, a number of strong computable reductions can be established for 
Hindman's Theorem for exactly large sums. For example, we have the following result.

\begin{proposition}\label{prop:htomega_to_ipt22}
$\IPHT^2_2$ with apartness is strongly computably reducible to $\HT^{!\omega}_2$ 
with apartness.
\end{proposition}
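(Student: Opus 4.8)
The plan is to exhibit a strong computable reduction in which the instance map is the identity: given an instance $f\colon\Nat\to 2$ of $\IPHT^2_2$ I use $f$ itself as the instance of $\HT^{!\omega}_2$. This is legitimate because both principles colour the positive integers, so the whole content of the argument lies in how a polarized pair is read off from a solution to $\HT^{!\omega}_2$ with apartness. Accordingly, let $H=\{h_1<h_2<\cdots\}$ be an apart set with $\FS^{!\omega}(H)$ monochromatic for $f$, say of colour $k<2$. The idea is to let $H_2$ consist of single elements of $H$ and to let $H_1$ consist of \emph{almost exactly large} sums, that is, sums of a block of $m$ consecutive elements of $H$ whose least element equals $m$; adjoining to such a block any larger element of $H$ yields an exactly large set, so the resulting pair-sum lies in $\FS^{!\omega}(H)$ and hence has colour $k$.

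Concretely, I would carve $H$ into consecutive finite blocks. Recursively, suppose an initial segment of $H$ has been consumed; let $m_i$ be the least unused element of $H$, let the $i$th block consist of the $m_i$ consecutive elements of $H$ beginning with $m_i$, and set $B^{(i)}$ to be their sum, while the single element of $H$ immediately following this block is taken to be $h^{(i)}$. Put $H_1:=\{B^{(i)}:i\in\Nat\}$ and $H_2:=\{h^{(i)}:i\in\Nat\}$, so that $B^{(1)}<h^{(1)}<B^{(2)}<h^{(2)}<\cdots$. For an increasing pair $(B^{(i)},h^{(j)})\in H_1\times H_2$ we have $j\ge i$, so $h^{(j)}$ is larger than every element of the $i$th block; the underlying set of the integer $B^{(i)}+h^{(j)}$ then has exactly $m_i+1$ elements and minimum $m_i$, hence is exactly large, and therefore $f(B^{(i)}+h^{(j)})=k$. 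Crucially, non-increasing pairs need not be checked, which is precisely what makes the interleaving work.

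Two routine verifications remain. First, $H_1$ and $H_2$ are infinite and computable from $H$ (each block is finite and the carving never halts), and the instance map is trivial, so this is a genuine strong computable reduction. Second, $H_1\cup H_2$ is apart: its members are built from pairwise disjoint, consecutively placed blocks of the apart set $H$, so their base-$2$ exponent ranges are linearly ordered and separated, whence $\mu(u)<\lambda(v)$ whenever $u<v$ in $H_1\cup H_2$. I expect the only genuine obstacle to be conceptual rather than computational: exactly large sums have unbounded length whereas $\IPHT^2_2$ concerns pairs, so one must arrange that \emph{every} increasing pair-sum is simultaneously an exactly large sum while keeping both $H_1$ and $H_2$ infinite. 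The almost-exactly-large/singleton interleaving above resolves exactly this tension, and it uses apartness of $H$ only to transfer apartness to the output, which is why the reduction is naturally to $\HT^{!\omega}_2$ with apartness.
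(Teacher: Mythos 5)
Your proof is correct and follows essentially the same construction as the paper's: your blocks of $m_i$ consecutive elements together with the reserved next element $h^{(i)}$ are exactly the paper's decomposition of $H$ into consecutive exactly large subsets $S_i$, with $H_1$ the sums of $S_i$ minus its largest term and $H_2$ the largest terms. The verification of monochromaticity on increasing pairs, the transfer of apartness, and the computability of the reduction all match the paper's argument.
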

\begin{proof}
Let $f:\Nat\to 2$ be given, and let $H = \{ h_1, h_2, h_3, \dots\}$ with $h_1 < h_2 < h_3 < \dots$ be an infinite 2-apart set such that $\FS^{!\omega}(H)$ is monochromatic for $f$ of colour $k < 2$. 
Let $S_1,S_2,S_3,\dots$ be such that each $S_i$ is an exactly large subset of $H$, 
$\bigcup_{i\in\Nat}S_i = H$, and $\max{S_i}<\min{S_{i+1}}$, for each $i\in\Nat$. Let $s_i = \sum S_i$.
Let $H_s := \{s_1, s_2, \dots\}$. $H_s$ is 2-apart and consists of the sums of consecutive disjoint exactly large subsets
of $H$. 
Let $H_t = \{t_1, t_2, \dots\}$ (in increasing order) be the set consisting of the elements from $H_s$ minus their largest term (when written as $!\omega$-sums). Note that distinct elements of $H_s$ share no term, because $H_s$ is 2-apart. Let $H_1 := H_t$ and $H_2 := \{s_i - t_i : i \in \Nat\}$. Then $(H_1, H_2)$ is a 2-apart solution for $\IPHT^2_2$. Note that both $H_1$ and $H_2$ are
computable relative to $H$. 
\end{proof}
Other results on $\HT^{!\omega}_2$ were proved by the third author 
in his BSc.~Thesis \cite{Lep:thesis}. For instance, 
the following implications hold over the base theory $\RCA_0$:
$\HT^{!\omega}_2$ implies $\HT^{=2}_2$, $\HT^{!\omega}_2$ with apartness implies
$\forall n \HT^{=2^n}_2$, $\HT^{!\omega}_2$ implies $\forall n \PHT^n_2$.
We believe that the study of the strength of $\HT^{!\omega}_2$ is of interest.

\section{Conclusion and some open questions}\label{sec:conclusion}

Our results are summarized in Table 1, along with previously known results. In the table we use Ramsey-theoretic 
statements instead of equivalent 
theories (thus $\RT^3_2$ for $\ACA_0$ and $\forall k \RT^1_k$ instead of $\mathsf{B}\Sigma^0_2$). 

\begin{table}[t]\label{table:results}
\centering
\caption{Implications over $\RCA_0$ ($\geq,\leq$) and strong combinatorial reductions ($\geq_{\mathrm{sc}}$, $\leq_{\mathrm{sc}}$).}
\begin{tabular}{l||c|c}
\hline\noalign{\smallskip}
Principle: & Lower Bound: & Upper Bound:\\
\noalign{\smallskip}
\hline
\noalign{\smallskip}
\hline
\noalign{\smallskip}
$\HT\equiv\FUT$ & $\geq \RT^3_2$ (\cite{Bla-Hir-Sim:87}) & $ \leq \RT^{!\omega}_2$ (\cite{Bla-Hir-Sim:87,Car-Zda:14})\\
\noalign{\smallskip}
\hline
\noalign{\smallskip}
$\HT^{\leq 2}_2$ & ? & $\leq \RT^{!\omega}_2$ (\cite{Bla-Hir-Sim:87,Car-Zda:14})\\
$\HT^{\leq 2}_2+ \forall k \RT^1_k$ & $\geq \SRT^2_2$ (\cite{DJSW:16}) & $\leq \RT^{! \omega}_2$ (\cite{Bla-Hir-Sim:87,Car-Zda:14})\\
$\FUT^{\leq 2}_2\equiv\HT^{\leq 2}_2$ with apartness & $\geq \RT^3_2$ (Prop.~\ref{prop:apht22_to_aca0}) & $\leq \RT^{! \omega}_2$ (\cite{Bla-Hir-Sim:87,Car-Zda:14})\\
$\HT^{\leq 2}_4$ & $\geq \RT^3_2$ (Th.~\ref{thm:ht24_to_aca0}), $\geq_{\mathrm{sc}}\IPT^2_2$ (Cor. \ref{cor1})  & $\leq \RT^{! \omega}_2$ (\cite{Bla-Hir-Sim:87,Car-Zda:14})\\
\noalign{\smallskip}
\hline
\noalign{\smallskip}
$\HT^{\exists\{a<b\}}_2$ & ? & $\leq \RT^3_2$, $\leq_{\mathrm{sc}}\RT^3_8$ (Th.~\ref{thm:htab_eq_aca0})\\ 
$\HT^{\exists\{a<b\}}_2$ with apartness & $\geq \RT^3_2$ (Th.~\ref{thm:htab_eq_aca0}) & $\leq \RT^3_2$, $\leq_{\mathrm{sc}}\RT^3_8$ (Th.~\ref{thm:htab_eq_aca0})\\ 
$\HT^{\exists\{a\geq 3\}}_2$ & ? & $\leq \RT^3_2$  (\cite{Car:16:wys})\\ 
$\HT^{\exists\{a \geq 3\}}_2$ with apartness & $\geq \RT^3_2$ (Th.~\ref{thm:hta_to_aca0}) & $\leq \RT^3_2$, $\leq_{\mathrm{sc}} \RT^{6}_{2^6}$  (\cite{Car:16:wys})\\ 
\noalign{\smallskip}
\hline
\noalign{\smallskip}
$\FUT^{=2}_2\equiv\HT^{= 2}_2$ with apartness  & $\geq_{\mathrm{sc}} \IPT^2_2$ (Th.~\ref{thm:hteq22_to_ipt22}) & $\leq_{\mathrm{sc}}\RT^2_2$ (obvious)\\
$\HT^{=3}_2$ & ? & $\leq_{\mathrm{sc}}\RT^3_2$ (obvious)\\ 
$\FUT^{=3}_2\equiv\HT^{=3}_2$ with apartness  & $\geq \RT^3_2$ (Th.~\ref{thm:hteq3_to_aca0}) & $\leq_{\mathrm{sc}}\RT^3_2$ (obvious)\\ 
\hline
\noalign{\smallskip}
$\IPHT^2_2$ with apartness & $\geq_{\mathrm{sc}} \IPT^2_2$ (Th.~\ref{thm:ipht22_eq_ipt22}) & $\leq \IPT^2_2$ (Th.~\ref{thm:ipht22_eq_ipt22})\\
\hline
\noalign{\smallskip}
$\HT^{!\omega}_2$ & ? & $\leq_{\mathrm{sc}} \RT^{!\omega}_2$ (obvious)\\
$\HT^{!\omega}_2$ with apartness & $\geq \RT^3_2$ (Prop.~\ref{prop:htomega_to_aca0}) & $\leq_{\mathrm{sc}} \RT^{!\omega}_2$ (obvious)\\
\noalign{\smallskip}
\hline
\end{tabular}
\end{table}

Our main result, Theorem \ref{thm:ht24_to_aca0}, showing that the $\RT^3_2$ lower bound known for $\HT$
already holds for $\HT^{\leq 2}$, might be read as indicating that the latter 
restriction is as strong as the full theorem, thus pointing to a negative answer to 
Question 12 of \cite{Hin-Lea-Str:03}. On the other hand, many of our additional results confirm the ``weak yet strong'' phenomenon
uncovered in \cite{Car:16:wys}: the known lower bounds on Hindman's Theorem hold for restricted versions 
for which --- contrary to the $\HT^{\leq n}$ restrictions studied in~\cite{DJSW:16} --- a matching 
$\RT^3_2$ upper bound is known. Analogously, the $\IPT^2_2$ lower bound for $\HT^{\leq 2}$
already holds for the principle $\HT^{=2}_2$ with apartness, which is provable from $\RT^2_2$
(for another example at this level, see \cite{Car:16:weak}).
Our results also highlight the role of the apartness condition on the solution set. They also apply to bounded versions 
of the Finite Unions formulation of Hindman's Theorem, in which an analogous condition is already built-in.

Many natural questions remain, besides the main open problems on $\HT$ and $\HT^{\leq 2}$ (Question 9 of \cite{Mon:11:open}
and Question 12 of \cite{Hin-Lea-Str:03}). The question of whether some of the known implications between Ramsey-type
theorems and Hindman-type theorems can be witnessed by strong computable reductions is of interest. We expect that
many separations are within reach of currently available methods. Some separations can be gleaned from our results and known results from the literature. For example, 
$\RT^4_8, \RT^3_9 \nleq_{\mathrm{sc}} \HT^{\exists\{a,b\}}_2$ with apartness, 
and $\RT^4_2, \RT^3_4 \nleq_{\mathrm{sc}} \HT^{=3}_2$ with apartness.
To see this, note that on the one hand we have $\HT^{\exists\{a,b\}}$ with 2-apartness $\leq_{\mathrm{sc}} \RT^3_8$
by the upper bound proof in \cite{Car:16:wys}, 
and $\HT^{=3}_2$ with 2-apartness $\leq_{\mathrm{sc}} \RT^3_2$ by the trivial proof.
On the other hand, $\RT^4_k \nleq_{\mathrm{sc}} \RT^3_k$, $\RT^3_9 \nleq_{\mathrm{sc}} \RT^3_8$  and $\RT^3_4 \nleq_{\mathrm{sc}} \RT^3_2$ (see, e.g., \cite{Pat:16}).  Note that the separations can strenghtened to computable
reducibility. 

We would like to single out the following two questions which seem to be of some general combinatorial interest. 

\begin{question}
Is there a strong computable reduction of $\IPT^3_k$ to $\HT^{\leq n}_\ell$, for some $n,k,\ell\geq 2$? 
\end{question}
On the one hand we know that the implication from $\HT^{\leq 2}_4$ to $\IPT^3_2$ holds over $\RCA_0$.
This follows from Theorem~\ref{thm:ht24_to_aca0} and the equivalence of $\IPT^3_2$ with $\RT^3_2$ (see \cite{Dza-Hir:11}). 
On the other hand, we do not know how to lift 
the combinatorial reduction $\IPT^2_2 \leq_{\mathrm{sc}}\HT^{\leq 2}_4$ of Corollary~\ref{cor1}
to higher exponents.

\begin{question}
Is there a strong computable reduction of $\HT$ to $\RT^{!\omega}_2$?
\end{question}
Combining the results of~\cite{Bla-Hir-Sim:87} and~\cite{Car-Zda:14}
we know that the implication from $\RT^{!\omega}_2$ to $\HT$ holds over $\RCA_0$. Can this be witnessed by a 
strong computable reduction? More informally: is there a combinatorial proof of Hindman's Theorem from 
the Pudl\'{a}k-R\"odl Theorem?

\end{document}